\numberwithin{equation}{section}
\renewcommand\frak{\mathfrak}
\renewcommand\Bbb{\mathbb}
\newcommand\Cal{\mathcal}
\newcommand{\x}{\times}
\renewcommand{\o}{\circ}
\newcommand{\al}{\alpha}
\newcommand{\be}{\beta}
\newcommand{\om}{\omega}
\renewcommand{\th}{\theta}
\newcommand{\si}{\sigma}
\newcommand{\La}{\Lambda}
\newcommand{\deq}{:=}
\newcommand{\Om}{\Omega}
\newcommand{\B}[2]{\langle#1,#2\rangle}
\newcommand{\CS}{\operatorname{CS}}
\newcommand{\tfg}{\widetilde{\mathfrak g}}
\newcommand{\Ad}{\operatorname{Ad}}
\newcommand{\tr}{\operatorname{tr}}
\newcommand{\Z}{\mathbb{Z}}
\renewcommand*{\d}{\mathrm{d}}
\newcommand*{\R}{\mathbb{R}}
\renewcommand{\i}{\mathrm{i}}
\newcommand{\col}{:}
\newcounter{theorem}
\numberwithin{theorem}{section}
\newtheorem{thm}[theorem]{Theorem}
\newtheorem{lemma}[theorem]{Lemma}
\newtheorem{prop}[theorem]{Proposition}
\newtheorem{cor}[theorem]{Corollary}
\theoremstyle{definition}
\newtheorem*{defNON}{Definition}
\newtheorem{definition}[theorem]{Definition}
\theoremstyle{remark}
\newtheorem{remark}[theorem]{Remark}
\title[Flat extensions and the Chern--Simons $3$-form]{Flat extensions of principal connections and the Chern--Simons $\boldsymbol{3}$-form}
\date{July 4, 2025}
\author[A.~{\v{C}}ap]{Andreas {\v{C}}ap}
\address{A.{\v{C}}. -- Faculty of Mathematics, University of Vienna, Vienna, Austria}
\email{Andreas.Cap@univie.ac.at}
\author[K.~J.~Flood]{Keegan J.~Flood}
\address{K.J.F. -- Faculty of Mathematics and Computer Science, UniDistance Suisse, Brig, Switzerland}
\email{keegan.flood@unidistance.ch}
\author[T.~Mettler]{Thomas Mettler}
\address{T.M. -- Faculty of Mathematics and Computer Science, UniDistance Suisse, Brig, Switzerland}
\email{thomas.mettler@fernuni.ch, mettler@math.ch}
\begin{document}

\begin{abstract}

We introduce the notion of a \emph{flat extension} of a connection $\theta$ on a principal bundle. Roughly speaking, $\theta$ admits a flat extension if it arises as the pull-back of a component of a Maurer--Cartan form. For trivial bundles over closed oriented $3$-manifolds, we relate the existence of certain flat extensions to the vanishing of the Chern-–Simons invariant associated with $\theta$. As an application, we recover the obstruction of Chern--Simons for the existence of a conformal immersion of a Riemannian $3$-manifold into Euclidean $4$-space. In addition, we obtain corresponding statements for a Lorentzian $3$-manifold, as well as a global obstruction for the existence of an equiaffine immersion into $\R^4$ of a $3$-manifold that is equipped with a torsion-free connection preserving a volume form.

\end{abstract}

\maketitle

\section{Introduction}

Chern--Simons forms and invariants derived from them are prominent examples of secondary invariants of certain types of connections \cite{MR0353327}. Apart from their interest in geometry and topology, Chern--Simons forms also play a fundamental role in theoretical physics. In contrast to standard characteristic classes and characteristic numbers which are defined on even-dimensional manifolds, Chern--Simons invariants are defined for manifolds of odd dimension. Among Chern--Simons forms, much interest is devoted to the Chern--Simons $3$-form which is defined for a $1$-form $\theta$ with values in the Lie algebra $\mathfrak{g}$ of a Lie group $G$ via
\[
\CS(\th)=\B{\th}{\d\th}+\tfrac13 \B{\th}{[\th,\th]},
\]
where $\B{\,\cdot\,}{\cdot\,}$ denotes a symmetric bilinear form on $\mathfrak{g}$ which is invariant under the adjoint action $\Ad$ of $G$ and $[\,\cdot\,,\cdot\,]$ denotes the Lie bracket of $\mathfrak{g}$. In the case where $\theta$ is a principal connection on a trivial principal $G$-bundle $\pi : P \to M$ over a closed oriented $3$-manifold $M$, the Chern--Simons $3$-form can be used to assign a real number
\[
c_{\sigma}=\int_M \sigma^*\CS(\theta)
\] 
to every global smooth section $\sigma : M \to P$. Depending on the topology of $G$, it may happen that $c_\sigma$ is independent of $\si$ or that one can choose $\B{\,\cdot\,}{\cdot\,}$ so that $c_{\sigma}$ is independent of $\sigma$ up to addition of an integer. In the former case, one obtains a real valued Chern--Simons invariant associated to $\theta$, in the latter -- more frequent -- case, an invariant with values in $\R/\Z$. 

The prototypical example of an $\R/\Z$-valued Chern--Simons invariant arises from considering the $\mathrm{SO}(3)$-bundle $\pi : \mathcal{SO}M \to M$ of orientation compatible orthonormal frames of a closed oriented Riemannian $3$-manifold $(M,g)$, equipped with its Levi-Civita connection form $\theta$ and where $\B{\,\cdot\,}{\cdot\,}$ is a suitable scalar multiple of the Killing form of $\mathfrak{so}(3)$. In \cite{MR0353327}, Chern--Simons show that the Riemannian $3$-manifold $(M,g)$ can be isometrically immersed into Euclidean $4$-space $\mathbb{E}^4$ only if $c_{\sigma}$ is an integer. Moreover, they observe that the associated $\R/\Z$-valued invariant is actually conformally invariant, so that the integrality of $c_{\sigma}$ is an obstruction to the existence of a conformal immersion into $\mathbb{E}^4$. The Chern--Simons invariant of a Riemannian $3$-manifold has since played an important role in hyperbolic geometry due to its relation to the $\eta$-invariant \cite{MR807069}. See also \cite{MR3228423} and \cite{MR3159164} for more recent related work.

In $3$-dimensional CR-geometry, the Chern--Simons $3$-form gives rise to an
$\R$-valued Chern--Simons invariant as discovered by Burns--Epstein
\cite{MR0936085}. To include this and similar examples, we work in the setting of
$\frak g$-connections on principal bundles whose structure group is a subgroup of
$G$.

For differential geometric structures not related to Riemannian or CR-geometry, the meaning of the Chern--Simons invariant seems to have received less attention in the literature. In this article, we relate vanishing statements for the Chern--Simons invariant to the notion of a \emph{flat extension of a principal connection}. To this end, suppose that $G$ is a Lie subgroup of a Lie group $\tilde{G}$ which is equipped with an $\Ad$-$\tilde{G}$ invariant bilinear form $\B{\,\cdot\,}{\cdot\,}$ on its Lie algebra $\tilde{\mathfrak{g}}$. We assume that the restriction of $\B{\,\cdot\,}{\cdot\,}$ to $\mathfrak{g}\times \mathfrak{g}$ is non-degenerate so that $\tilde{\mathfrak{g}}=\mathfrak{g}\oplus \mathfrak{g}^{\perp}$, where $\mathfrak{g}^{\perp}$ denotes the orthogonal complement of $\mathfrak{g}$ with respect to $\B{\,\cdot\,}{\cdot\,}$. Writing a $\tilde{\mathfrak{g}}$-valued $1$-form $\psi$ as $\psi=\psi^{\top}+\psi^{\perp}$ with $\psi^{\top}$ taking values in $\mathfrak{g}$ and $\psi^{\perp}$ taking values in $\mathfrak{g}^{\perp}$, we define:
\begin{definition}\label{defn:flatext}
Let $P \to M$ be a principal $G$-bundle and $\theta \in \Omega^1(P,\mathfrak{g})$ a connection. A \emph{flat extension of $\theta$ of type $(\tilde{G},G)$} is a bundle homomorphism $F : P \to \tilde{G}$ into the total space of the principal $G$-bundle $\tilde{G} \to \tilde{G}/G$ so that
\[
\theta=F^*(\mu_{\tilde{G}}^{\top}),
\]
where $\mu_{\tilde{G}}$ denotes the Maurer--Cartan form of $\tilde{G}$. 
\end{definition} 
Let $\CS(\mu_{\tilde G})$ denote the Chern--Simons form of the Maurer--Cartan form $\mu_{\tilde G}$ of $\tilde{G}$, computed with respect to $\B{\,\cdot\,}{\cdot\,}$. Moreover, we compute the Chern--Simons $3$-form of a $\mathfrak{g}$-valued $1$-form with respect to the bilinear form on $\mathfrak{g}$ obtained by restricting the bilinear form $\B{\,\cdot\,}{\cdot\,}$ on $\tilde{\mathfrak{g}}$ to $\mathfrak{g}\times \mathfrak{g}$. In the case where the pair $(\tilde{\mathfrak{g}},\mathfrak{g})$ of Lie algebras is a \emph{symmetric pair} (see \cref{sect:partialblindness} for details), we obtain:
\begin{cor}\label{cor:keystatement}
Suppose $P \to M$ is a trivial principal $G$-bundle over a closed oriented $3$-manifold $M$ and $\theta \in \Omega^1(P,\mathfrak{g})$ a connection admitting a flat extension of type $(\tilde{G},G)$ with $(\tilde{\mathfrak{g}},\mathfrak{g})$ being a symmetric pair. If $\CS(\mu_{\tilde{G}})$ is exact, then $\int_M\sigma^*\CS(\theta)=0$ and if $\CS(\mu_{\tilde{G}})$ represents an element of $H^3(\tilde{G},\Z)$, then $\int_M\sigma^*\CS(\theta) \in \Z$ for every global smooth section $\sigma : M \to P$.
\end{cor}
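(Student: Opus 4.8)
The plan is to reduce everything to the pointwise identity $F^*\CS(\mu_{\tilde G})=\CS(\th)$ of $3$-forms on $P$; once this is in hand, the two cohomological conclusions follow by routine integration over the closed oriented $3$-manifold $M$. Write $\mu=\mu_{\tilde G}=\mu^\top+\mu^\perp$ with $\mu^\top$ valued in $\mathfrak g$ and $\mu^\perp$ valued in $\mathfrak g^\perp$, and set $\th=F^*\mu^\top$ together with $\be\deq F^*\mu^\perp\in\Omega^1(P,\mathfrak g^\perp)$. Since $F^*$ commutes with $\d$ and with the bracket and respects the fixed splitting $\tfg=\mathfrak g\oplus\mathfrak g^\perp$, applying $F^*$ to the Maurer--Cartan equation $\d\mu+\tfrac12[\mu,\mu]=0$ gives $\d(\th+\be)+\tfrac12[\th+\be,\th+\be]=0$, whose $\mathfrak g^\perp$-component reads $\d\be=-[\th,\be]$.

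The core step is the computation of $F^*\CS(\mu)$, for which I would use two structural inputs. First, orthogonality $\B{\mathfrak g}{\mathfrak g^\perp}=0$ annihilates every mixed term in $\B{\th+\be}{\d\th+\d\be}$, leaving $\B{\th}{\d\th}+\B{\be}{\d\be}$. Second, the symmetric-pair relations $[\mathfrak g,\mathfrak g]\subseteq\mathfrak g$, $[\mathfrak g,\mathfrak g^\perp]\subseteq\mathfrak g^\perp$ and $[\mathfrak g^\perp,\mathfrak g^\perp]\subseteq\mathfrak g$, combined again with orthogonality, collapse $\B{\th+\be}{[\th+\be,\th+\be]}$ to $\B{\th}{[\th,\th]}+\B{\th}{[\be,\be]}+2\B{\be}{[\th,\be]}$. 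Collecting terms yields
\[
F^*\CS(\mu)-\CS(\th)=\B{\be}{\d\be}+\tfrac13\B{\th}{[\be,\be]}+\tfrac23\B{\be}{[\th,\be]}.
\]
Substituting $\d\be=-[\th,\be]$ turns the first summand into $-\B{\be}{[\th,\be]}$, so the right-hand side reduces to $\tfrac13\bigl(\B{\th}{[\be,\be]}-\B{\be}{[\th,\be]}\bigr)$. Finally, $\Ad$-invariance of $\B{\,\cdot\,}{\cdot\,}$ makes the assignment $(\al,\ga,\de)\mapsto\B{\al}{[\ga,\de]}$ on $\tfg$-valued $1$-forms totally symmetric: the sign from the antisymmetry of the scalar $\B{X}{[Y,Z]}$ exactly cancels the sign produced by reordering the underlying scalar $1$-forms. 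Hence $\B{\th}{[\be,\be]}=\B{\be}{[\th,\be]}$ and the difference vanishes, so $F^*\CS(\mu_{\tilde G})=\CS(\th)$ on all of $P$.

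With this identity established, fix a global section $\si\col M\to P$ and set $\ph\deq F\o\si\col M\to\tilde G$. Functoriality of pullback gives $\si^*\CS(\th)=\si^*F^*\CS(\mu_{\tilde G})=\ph^*\CS(\mu_{\tilde G})$, so it suffices to integrate $\ph^*\CS(\mu_{\tilde G})$ over $M$. If $\CS(\mu_{\tilde G})=\d\eta$ is exact, then $\ph^*\CS(\mu_{\tilde G})=\d(\ph^*\eta)$ and Stokes' theorem on the closed manifold $M$ forces $\int_M\si^*\CS(\th)=0$. If instead $\CS(\mu_{\tilde G})$ (which is closed, since $\mu_{\tilde G}$ has vanishing curvature) represents an integral class, then $\ph^*$ carries it to an integral class in $H^3(M;\Z)$, whose pairing with the fundamental class $[M]$ is an integer; that is, $\int_M\si^*\CS(\th)\in\Z$.

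The main obstacle is the algebraic heart of the second paragraph: one must simultaneously deploy the orthogonal splitting, the three symmetric-pair bracket inclusions, the pulled-back Maurer--Cartan equation, and the total symmetry of $\B{\,\cdot\,}{[\cdot,\cdot]}$, keeping careful track of the graded signs so that the two surviving cubic terms cancel rather than reinforce. This exact cancellation is precisely the \emph{partial blindness} of the Chern--Simons $3$-form to the transverse component $\be$ in the symmetric-pair setting; once it is secured, the topological conclusions are immediate.
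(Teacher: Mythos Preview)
Your proof is correct and follows essentially the same route as the paper. The paper factors the argument through a slightly more general lemma (\cref{lem:CS-pairs}), which for any $\tfg$-valued $1$-form $\psi$ with $[\psi^\perp,\psi^\perp]\in\Om^2(N,\frak g)$ yields $\CS(\psi)=\CS(\psi^\top)+\B{\psi^\perp}{\Psi^\perp}$ (keeping the curvature term), and then specializes to the flat form $F^*\mu_{\tilde G}$ in \cref{thm:main-flat}; you merge these two steps by invoking the pulled-back Maurer--Cartan equation from the outset, but the decisive cancellation $\B{\th}{[\be,\be]}=\B{\be}{[\th,\be]}$ via invariance of $\B{\,\cdot\,}{\cdot\,}$ is identical in both arguments.
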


\cref{cor:keystatement} follows from our slightly more general main \cref{thm:main-flat}. The proof of this theorem strongly depends on a certain algebraic identity -- see \cref{lem:CS-pairs} -- which expresses $\mathrm{CS}(\psi)$ as the sum of $\mathrm{CS}(\psi^{\top})$ and a term involving the curvature of $\psi$. In particular, if $\psi$ satisfies the Maurer--Cartan equation $0=\d\psi+\tfrac{1}{2}[\psi,\psi]$, then $\CS(\psi)=\CS(\psi^{\top})$, so that the Chern--Simons $3$-form only detects the $\mathfrak{g}$-valued part of $\psi$.  

The obstruction of Chern--Simons is obtained from our main result by observing that the Gauss map of an isometric immersion $(M,g) \to \mathbb{E}^4$ is covered by a map $F : \mathcal{SO}M \to \mathrm{SO}(4)$ which is a flat extension of the Levi-Civita connection $\theta$ of type $(\mathrm{SO}(4),\mathrm{SO}(3))$. Since $(\mathfrak{so}(4),\mathfrak{so}(3))$ is a symmetric pair, it remains to argue that for a suitable choice of $\B{\,\cdot\,}{\cdot\,}$ on $\mathfrak{so}(4)$, the $3$-form $\CS(\mu_{\mathrm{SO}(4)})$ represents an element of $H^3(\mathrm{SO}(4),\Z)$. This requires a good understanding of the third integral homology group of $\mathrm{SO}(4)$. The relevant calculations are carried out in \cref{Appendix}. 

As a further application  -- see \cref{prop:Lorentz-flat-ext} --  we discuss the case of space- and time-oriented Lorentzian $3$-manifolds that admit a global orthonormal frame. Here the Chern--Simons invariant is $\R$-valued and for appropriate choices of $\tfg$ flat extensions are obtained from an isometric immersion into the Lorentzian vector space $\Bbb R^{3,1}$ and the split-signature vector space $\Bbb R^{2,2}$, respectively. Existence of such an immersion then leads to integrality, respectively, vanishing of the Chern--Simons invariant. 

The group $\mathrm{SO}(3)\subset \mathrm{SL}(3,\R)$ is a strong deformation retract by Iwasawa decomposition. As a consequence of this, one also obtains an $\R/\Z$-valued Chern--Simons invariant for an oriented $3$-manifold $M$ equipped with a torsion-free connection $\nabla$ on its tangent bundle that preserves some volume form $\nu$. In \cref{thm:equiaffine} we show that the vanishing of this invariant obstructs the existence of an equiaffine immersion of $(M,\nabla,\nu)$ into $\R^4$, where $\R^4$ is equipped with its standard flat connection and volume form.   

In \cref{app:examples} we discuss two examples. In particular, applying \cref{thm:equiaffine}, we show that for $\mathbb{RP}^3$ equipped with the Levi-Civita connection $\nabla$ and volume form $\nu$ arising from its standard metric, there exists no global equiaffine immersion into $\R^4$. 

\subsection*{Acknowledgments} T.M.~is grateful to Lukas Lewark for helpful communications.

\section{Basics on the Chern--Simons $3$-form}\label{1}

We start by collecting some basic facts about Chern--Simons $3$-forms. We refer to
\cite{MR1337109} for additional context. We consider a Lie algebra $\mathfrak{g}$ and study
$\frak{g}$-valued differential forms. In addition, we assume that $\frak{g}$ is endowed
with a non-degenerate $\frak{g}$-invariant, symmetric bilinear form, which we denote
by $\B{\,\cdot\,}{\cdot\,}$. Recall that for a simple Lie algebra $\frak{g}$ any such form has
to be a multiple of the Killing form, so there is a unique such form up to scale in
this case. The normalization of the form will be important in what follows, 
however. For a manifold $N$, we will denote by $\Om^k(N,\frak{g})$ the space of $\frak
g$-valued $k$-forms on $N$. The main case we are interested in is that $N$ is
the total space of a principal fibre bundle and we aim at invariants defined on the
base $M$ of that bundle. For any $N$, we have two basic operations on $\frak{g}$-valued
forms, as
\begin{gather*}
\B{\,\cdot\,}{\cdot\,}\col\Om^p(N,\frak{g})\x\Om^q(N,\frak{g})\to \Om^{p+q}(N) \\
[\,\,,\,]\col\Om^p(N,\frak{g})\x\Om^q(N,\frak{g})\to \Om^{p+q}(N,\frak{g}).
\end{gather*}
These are characterized by the fact that for $\al\in\Om^p(N)$, $\be\in\Om^q(N)$ and $X,Y\in\frak{g}$, we have 
\begin{equation}\label{eq:basic-ops}
\B{\al\otimes X}{\be\otimes Y}\deq\B{X}{Y} \al\wedge\be, \qquad
[\al\otimes X,\be\otimes Y]\deq\al\wedge\be\otimes[X,Y]. 
\end{equation}
The definitions immediately imply that for $\om\in\Om^p(N,\frak{g})$ and
$\theta\in\Om^q(N,\frak{g})$ one obtains
\[
\B{\om}{\theta}=(-1)^{pq}\B{\theta}{\omega}, \qquad
[\om,\theta]=(-1)^{pq+1}[\theta,\om], 
\]
as well as
\begin{gather}
\label{eq:dB} \d\B{\om}{\theta}=\B{\d\om}{\th}+(-1)^p\B{\om}{\d\th}, \\
\label{eq:dL}  \d[\om,\th]=[\d\om,\th]+(-1)^p[\om,\d\th].
\end{gather}
The fact that $\B{\,\cdot\,}{\cdot\,}$ is $\frak{g}$-invariant respectively the Jacobi identity for $[\ ,\ ]$ implies that for an additional form
$\tau\in\Om^r(N,\frak{g})$, we obtain
\begin{gather}
\label{eq:B-invar}\B{\om}{[\tau,\th]}=\B{[\om}{\tau],\theta},\\
\label{eq:Jacobi}
[\om,[\tau,\theta]]=[[\om,\tau],\theta]+(-1)^{pr}[\tau,[\om,\theta]].
\end{gather}

\begin{definition}\label{def:CS}
For $\theta\in\Om^1(N,\frak{g})$, the \textit{Chern--Simons $3$-form} $\CS(\th)\in\Om^3(N)$ is defined as
\[
\CS(\th)\deq\B{\th}{\d\th}+\tfrac13 \B{\th}{[\th,\th]}. 
\]
\end{definition}
Notice that when we write $\Theta=\d\theta+\frac{1}{2}[\theta,\theta]$ we have
\[
\CS(\theta)=\B{\theta}{\Theta}-\tfrac{1}{6}\B{\theta}{[\theta,\theta]}.
\]
The following well-known property is one of the main motivations for the definition of
the Chern--Simons $3$-form: 
\begin{equation}\label{eqn:extdifcs}\d\CS(\th)=\B{\Theta}{\Theta}.\end{equation} 
The construction is natural in the sense that if $M$ is a smooth manifold
and $\sigma \col M \to N$ a smooth map, then
\begin{equation}\label{eqn:cspullback}
\sigma^*\CS(\theta)=\CS(\sigma^*\theta).
\end{equation}
Note that \eqref{eqn:extdifcs} in particular implies that if $\th$ satisfies the Maurer--Cartan equation $\d\th+\frac12[\th,\th]=0$, then $\d\CS(\th)=0$ and hence $\CS(\th)$ determines a well-defined cohomology class in $H^3(N,\R)$.

\section{Chern--Simons invariants}\label{1.3}

The standard setting for Chern--Simons invariants associated to $(\frak g,\B{\,}{\,})$ uses principal connection forms on principal fibre bundles with structure group a Lie group $G$ with Lie algebra $\frak{g}$. In some situations, it is advantageous to use a more general setting, namely to start from principal bundles whose structure group $H$ is a Lie subgroup of a Lie group $G$ with Lie algebra $\frak g$. Then the Lie algebra $\frak{h}$ of $H$ naturally is a subalgebra of $\frak{g}$ and we can then restrict the adjoint representation of $G$ to $H$. This provides an extension of the adjoint representation of $H$ to a representation on $\frak{g}$ that we also denote by $\Ad$ if there is no risk of confusion.

In this situation, there is a natural notion of $\frak{g}$-connections on principal
fibre bundles with structure group $H$. To formulate this, assume that $\pi\col
P\to M$ is a principal $H$-bundle. Let $R \col P \times H \to P$ denote the
principal right action and define for all $h \in H$ the map $R_h=R(\cdot,h) \col P
\to P$ and for all $u \in P$ the map
$\iota_u=R(u,\cdot) \col H \to P$.

\begin{definition}\label{defn:gconnection}
Let $G$ be a Lie group with Lie algebra $\mathfrak{g}$ and $H\subset G$ a Lie
subgroup. A $\mathfrak{g}$-valued $1$-form $\theta \in \Omega^1(P,\mathfrak{g})$ on
some principal $H$-bundle $\pi \col P \to M$ is called a
\emph{$\mathfrak{g}$-connection} if for all $u\in P$ and all $h \in H$ we
have
\begin{equation}\label{eqn:gvalued1form}
\iota_u^*\theta=\mu_H \qquad \text{and} \qquad R_h^*\theta=\Ad(h^{-1})\circ \theta,
\end{equation}
where $\mu_H$ denotes the Maurer--Cartan form of $H$. 
\end{definition}

\begin{remark}\leavevmode
\begin{enumerate}
\item[(i)] Notice that if $H=G$, then the definition of $\mathfrak{g}$-connection agrees with the standard notion of a principal connection.
\item[(ii)] The first condition in the definition is equivalent to the fact that $\theta$ reproduces the generators of fundamental vector fields, i.e.\ that $\theta(\zeta_A)=A$ for any $A\in\frak{h}$. Here $\zeta_A(u)=\tfrac{\d}{\d t}|_{t=0}R(u,\exp(tA))$. 
\item[(iii)] Let $\pi\col P\to M$ be a principal $H$-bundle and $\theta\in\Omega^1(P,\frak{g})$ a $\mathfrak{g}$-connection. Then we can extend the structure group to $G$ by forming $\hat{P}\col=P\x_H G\to M$ and there is a canonical inclusion $i\col P\to \hat{P}$. One easily shows that there is a unique principal connection $\hat{\theta}\in\Omega^1(\hat{P},\frak{g})$ such that $i^*\hat{\theta}=\theta$.
\item[(iv)] Conversely, suppose $\hat{\pi} \col \hat{P} \to M$ is a principal $G$-bundle and $\hat{\theta} \in \Omega^1(\hat{P},\mathfrak{g})$ is a principal connection. If $\pi \col P \to M$ is a reduction of $\hat{\pi} : \hat{P} \to M$ to the structure group $H$, then restricting $\hat{\theta}$ to $P$ yields a $\mathfrak{g}$-connection on $\pi : P \to M$. 
\item[(v)] An important source of $\mathfrak{g}$-connections with $H\neq G$ is provided by canonical Cartan connections associated to geometric structures. We will discuss examples of this in \cref{1.4} and study Chern--Simons invariants in the context of Cartan connections in more detail in a forthcoming article.
\end{enumerate}
\end{remark}

Fixing $(\frak{g},\B{\,\cdot\,}{\cdot\,})$ and $H\subset G$, we can consider the Maurer--Cartan
form $\mu_H\in\Om^1(H,\frak{h})$ and we can of course also view $\mu_H$ as an element
of $\Om^1(H,\frak{g})$. Hence we can form the associated Chern--Simons form
$\CS(\mu_H)\in\Om^3(H,\R)$, which we will also denote by
$\CS^{\frak{g}}(\mu_H)$ to emphasize the role of $\frak{g}$. Since $\mu_H$ satisfies
the Maurer--Cartan equation, this form is closed by \eqref{eqn:extdifcs} and hence
determines a cohomology class $[\CS^{\frak{g}}(\mu_H)]\in H^3(H,\R)$.

Explicitly, $\CS(\mu_H)$ is the left-invariant $3$-form on $H$, which is induced by
the trilinear map
\[
\frak{h}^3\to\R, \qquad (X,Y,Z)\mapsto
-\frac16\B{X}{[Y,Z]}
\]
which is complete alternating by invariance of $\B{\,\cdot\,}{\cdot\,}$. If $\frak{h}$ is simple,
then the restriction of $\B{\,\cdot\,}{\cdot\,}$ to $\frak{h}$ has to be a multiple
of the Killing form, and hence $\CS^{\frak{g}}(\mu_H)$ is a multiple of the so-called
\textit{Cartan $3$-form} on $H$.
It is well-known that for a compact simple Lie group $H$, the cohomology class of the Cartan $3$-form  spans
  $H^3(H,\R)\cong\R$. We now have:
  \begin{prop}\label{prop:pullbacks}
  Let $M$ be a closed oriented $3$-manifold, $\pi \col P \to M$ be a principal
  $H$-bundle that admits a global smooth section and let $\theta\in\Om^1(P,\frak{g})$
  be a $\mathfrak{g}$-connection. For a smooth section $\si\col M\to P$
  consider 
  \[
  c_\si\deq\int_M \si^*\CS(\theta)\in\R.
  \]
  \begin{enumerate}
    \item[(i)] If $\CS^{\frak{g}}(\mu_H)$ is exact, then $c_\si$ is independent of $\si$
    and hence defines an invariant of the form $\theta$.

    \item[(ii)] Suppose that $\B{\,\cdot\,}{\cdot\,}$ is chosen in such a way that $\CS^{\frak
    g}(\mu_H)$ represents an element of $H^3(H,\Z)$. Then $c_\si+\Z\in\R/\Bbb
    Z$ is independent of $\si$, and hence we obtain an invariant of $\theta$ with
    values in $\R/\Z$.
  \end{enumerate}
  \end{prop}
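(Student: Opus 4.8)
The plan is to compare $c_{\si_0}$ and $c_{\si_1}$ for two global sections and to show that their difference is controlled entirely by $\CS^{\frak g}(\mu_H)$. Since each fibre of $P$ is an $H$-torsor, there is a unique smooth map $g\col M\to H$ with $\si_1(x)=\si_0(x)\cdot g(x)$ for all $x$. Splitting the derivative of the right action $R$ at $(\si_0(x),g(x))$ into the tangent maps of $R_{g(x)}$ and of $\iota_{\si_0(x)}$, and feeding in the two defining properties \eqref{eqn:gvalued1form} of a $\frak g$-connection, a direct computation yields the gauge transformation law
\[
\si_1^*\th=\Ad(g^{-1})\o\si_0^*\th+g^*\mu_H.
\]
The $\Ad$-term comes from $R_h^*\th=\Ad(h^{-1})\o\th$ and the Maurer--Cartan term from $\iota_u^*\th=\mu_H$.

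The crucial step is then an algebraic identity for the behaviour of $\CS$ under this substitution. Writing $\al\deq\si_0^*\th$ and $\nu\deq g^*\mu_H$, and using that $\nu$ satisfies the Maurer--Cartan equation $\d\nu+\tfrac12[\nu,\nu]=0$ together with the $\Ad$-invariance of $\B{\,\cdot\,}{\cdot\,}$ and the calculus rules \eqref{eq:dB}--\eqref{eq:Jacobi}, I would establish
\[
\CS\bigl(\Ad(g^{-1})\o\al+\nu\bigr)=\CS(\al)+\CS(\nu)+\d\om
\]
for an explicit $2$-form $\om\in\Om^2(M)$ that is algebraic in $\al$ and $\nu$ (of the schematic shape $\B{\nu}{\al}$). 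Since $\nu$ is Maurer--Cartan we have $\CS(\nu)=g^*\CS^{\frak g}(\mu_H)$ by naturality \eqref{eqn:cspullback}. Combining this with \eqref{eqn:cspullback} applied to $\si_i$, so that $\si_i^*\CS(\th)=\CS(\si_i^*\th)$, gives
\[
\si_1^*\CS(\th)-\si_0^*\CS(\th)=g^*\CS^{\frak g}(\mu_H)+\d\om.
\]
I expect this transformation identity to be the main obstacle: it is the one genuinely computational point, requiring careful bookkeeping of the cross terms between $\Ad(g^{-1})\o\al$ and $\nu$ and repeated use of invariance to collapse them into $g^*\CS^{\frak g}(\mu_H)$ modulo an exact form.

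With the identity at hand the conclusion is immediate. As $M$ is closed, Stokes' theorem removes the exact term, leaving
\[
c_{\si_1}-c_{\si_0}=\int_M g^*\CS^{\frak g}(\mu_H).
\]
For part (i), if $\CS^{\frak g}(\mu_H)$ is exact then so is its pullback $g^*\CS^{\frak g}(\mu_H)$, whence the integral vanishes by Stokes and $c_\si$ is independent of $\si$. For part (ii), if $\CS^{\frak g}(\mu_H)$ represents the image of a class $c\in H^3(H,\Z)$, then $\int_M g^*\CS^{\frak g}(\mu_H)$ is the evaluation of the integral class $g^*c\in H^3(M,\Z)$ on the integral fundamental class $[M]$, hence an integer; therefore $c_\si+\Z\in\R/\Z$ is independent of $\si$.
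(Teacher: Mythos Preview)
Your proposal is correct and follows essentially the same route as the paper: both reduce to the identity $c_{\si_1}-c_{\si_0}=\int_M g^*\CS^{\frak g}(\mu_H)$ via the gauge transformation law for $\CS$, with the only difference being that the paper establishes the transformation identity upstairs on $P\times H$ (as $R^*\CS(\theta)=\CS(\theta)+\CS^{\frak g}(\mu_H)+\d\B{\Ad^{-1}\circ\,\theta}{\mu_H}$) and then pulls back along $(\si_0,g)$, whereas you pull back first and do the computation on $M$. The algebraic content and the exact correction term are the same in both versions.
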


  The proof of \cref{prop:pullbacks} relies on the identity \eqref{eqn:keyid} below
  which is standard in the case of principal connections (see for instance
  \cite{MR1337109}). The case of $\mathfrak{g}$-valued connection forms is quite
  similar. For the convenience of the reader we include a proof in our more
  general setting: 
  \begin{lemma} Fix $(\frak{g},\B{\,\cdot\,}{\cdot\,})$ and $H\subset G$ as above.
  Then for a $\mathfrak{g}$-connection $\theta \in \Omega^1(P,\mathfrak{g})$ on a
  principal $H$-bundle $\pi \col P \to M$ we have, not indicating explicitly
  pullbacks along projections in a product,
  \begin{equation}\label{eqn:keyid}
  R^*\CS(\theta)=\CS(\theta)+\CS^{\frak g}(\mu_H)+\d\B{\Ad^{-1}\circ\,\theta}{\mu_H},
  \end{equation}
  where $\Ad^{-1}\col=\Ad\circ I_H$ denotes the composition of the inversion $I_H \col H \to H$, $h \mapsto h^{-1}$ and the adjoint representation of $H$, thought of as acting on $\mathfrak{g}$.
  \end{lemma}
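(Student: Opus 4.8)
The plan is to reduce everything to a single computation on the product $P\times H$, exploiting the naturality \eqref{eqn:cspullback} of the Chern--Simons form. First I would determine $R^*\theta$. Writing a tangent vector to $P\times H$ at $(u,h)$ as a sum of a vector tangent to $P$ and one tangent to $H$, the differential of the action decomposes as $TR_h$ on the first summand and $T\iota_u$ on the second. Feeding this into $\theta$ and invoking the two defining properties \eqref{eqn:gvalued1form} of a $\mathfrak g$-connection -- namely $R_h^*\theta=\Ad(h^{-1})\circ\theta$ and $\iota_u^*\theta=\mu_H$ -- yields, with pullbacks along the projections suppressed,
\[
R^*\theta=\Ad^{-1}\circ\theta+\mu_H .
\]
Set $\Psi\deq\Ad^{-1}\circ\theta$. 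By \eqref{eqn:cspullback} it then remains to expand $\CS(\Psi+\mu_H)$ and to match it against the right-hand side of \eqref{eqn:keyid}.

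Before expanding I would record two auxiliary identities on $P\times H$. Since $\theta$ is pulled back from $P$, differentiating $\Psi=\Ad^{-1}\circ\theta$ and using the derivative $\d(\Ad^{-1})=-\ad(\mu_H)\circ\Ad^{-1}$ of the map $h\mapsto\Ad(h^{-1})$ gives
\[
\d\Psi=\Ad^{-1}\circ\d\theta-[\mu_H,\Psi].
\]
Because $\Ad(h^{-1})$ is a Lie algebra automorphism, the bracket is preserved, so $[\Psi,\Psi]=\Ad^{-1}\circ[\theta,\theta]$, while $\mu_H$ satisfies the Maurer--Cartan equation $\d\mu_H=-\tfrac12[\mu_H,\mu_H]$. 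Now I would expand $\CS(\Psi+\mu_H)=\B{\Psi+\mu_H}{\d(\Psi+\mu_H)}+\tfrac13\B{\Psi+\mu_H}{[\Psi+\mu_H,\Psi+\mu_H]}$ by bilinearity of $\B{\,\cdot\,}{\cdot\,}$ and $[\,\cdot\,,\cdot\,]$, sorting the resulting terms according to how many factors of $\Psi$ versus $\mu_H$ they carry. The purely $\Psi$ part is $\B{\Psi}{\Ad^{-1}\circ\d\theta}+\tfrac13\B{\Psi}{[\Psi,\Psi]}$, which by $\Ad$-invariance of $\B{\,\cdot\,}{\cdot\,}$ equals $\B{\theta}{\d\theta}+\tfrac13\B{\theta}{[\theta,\theta]}=\CS(\theta)$. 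The purely $\mu_H$ part collapses, via the Maurer--Cartan equation, to $-\tfrac16\B{\mu_H}{[\mu_H,\mu_H]}=\CS^{\frak g}(\mu_H)$.

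The crux, and the step I expect to be most delicate, is to show that the remaining mixed terms assemble precisely into $\d\B{\Psi}{\mu_H}$. Using \eqref{eq:dB} together with the two auxiliary identities, I would first compute
\[
\d\B{\Psi}{\mu_H}=\B{\mu_H}{\Ad^{-1}\circ\d\theta}-\tfrac12\B{\Psi}{[\mu_H,\mu_H]},
\]
where the contribution $-\B{[\mu_H,\Psi]}{\mu_H}$ from $\d\Psi$ and the contribution $\tfrac12\B{\Psi}{[\mu_H,\mu_H]}$ from $\d\mu_H$ are combined using invariance \eqref{eq:B-invar} and the graded symmetry of $\B{\,\cdot\,}{\cdot\,}$. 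On the other side, repeated application of \eqref{eq:B-invar} and graded symmetry reduces every mixed term in the expansion to a rational multiple of one of the three expressions $\B{\Psi}{[\mu_H,\Psi]}$, $\B{\Psi}{[\mu_H,\mu_H]}$ or $\B{\mu_H}{\Ad^{-1}\circ\d\theta}$.

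Careful bookkeeping of the coefficients then finishes the proof: the coefficient of $\B{\Psi}{[\mu_H,\Psi]}$ vanishes, as the contributions $-1+\tfrac23+\tfrac13$ cancel, whereas the coefficient of $\B{\Psi}{[\mu_H,\mu_H]}$ sums to $-\tfrac12$; together with the single surviving term $\B{\mu_H}{\Ad^{-1}\circ\d\theta}$ this reproduces $\d\B{\Psi}{\mu_H}$ exactly. Adding the three groups -- $\CS(\theta)$, $\CS^{\frak g}(\mu_H)$, and $\d\B{\Ad^{-1}\circ\theta}{\mu_H}$ -- gives \eqref{eqn:keyid}. The main obstacle is therefore not conceptual but organizational: keeping the graded signs and the $\Ad$-invariance manipulations straight while tracking the rational coefficients through the many mixed terms.
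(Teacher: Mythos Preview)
Your proof is correct and follows essentially the same route as the paper's: both compute $R^*\theta=\Ad^{-1}\circ\theta+\mu_H$, establish the auxiliary identity $\d\Psi=\Ad^{-1}\circ\d\theta-[\mu_H,\Psi]$, and then expand $\CS(\Psi+\mu_H)$ bilinearly, using $\Ad$-invariance and \eqref{eq:B-invar} to match the pieces against $\CS(\theta)$, $\CS^{\frak g}(\mu_H)$ and the exact term. The only noticeable difference is organizational: the paper derives the formula for $\d\Psi$ by computing $R^*\Theta$ in two ways (while remarking that a direct computation also works), and it groups the expansion so that $\B{\alpha}{\d\alpha}+\tfrac13\B{\alpha}{[\alpha,\alpha]}+\B{\mu_H}{[\alpha,\alpha]}$ is shown to equal $\CS(\theta)$ at the end, whereas you substitute $\d\Psi$ immediately and sort by the $\Psi$/$\mu_H$ count; the underlying bookkeeping is identical.
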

  \begin{proof}
  Throughout this proof, we continue to not indicate pullbacks of differential forms
  to a product along the projections explicitly. In this language,
  \eqref{eqn:gvalued1form} is equivalent to
  $R^*\theta=\mu_H+\Ad^{-1}\circ\,\theta$. Putting $\alpha\deq\Ad^{-1}\circ\,\theta$,
  we first claim that
  \begin{equation}\label{eqn:extderivad}
  \d\alpha=\Ad^{-1}\circ\,\d\theta-[\mu_H,\alpha].
  \end{equation}
  This can be proved by a direct computation. Alternatively, writing
  $\Theta=\d\theta+\frac{1}{2}[\theta,\theta]$, the equations
  \eqref{eqn:gvalued1form} imply that $i_u^*\Theta=0$ and
  $R_h^*\Theta=\Ad(h^{-1})\circ\Theta$, which reads as
  \begin{equation}\label{eqn:rightactioncurv}
  R^*\Theta=\Ad^{-1}\circ\,\Theta.
  \end{equation}
  On the other hand, we can compute directly
  \[
  \begin{aligned}
  R^*\Theta&=R^*\left(\d\theta+\tfrac{1}{2}[\theta,\theta]\right)=\d(R^*\theta)+\tfrac{1}{2}[R^*\theta,R^*\theta]\\
  &=\d(\mu_H+\alpha)+\tfrac{1}{2}[\mu_H+\alpha,\mu_H+\alpha]\\
  &=\d\alpha+[\mu_H,\alpha]+\tfrac{1}{2}\Ad^{-1}\circ [\theta,\theta],
\end{aligned}
\]
where the fourth equality uses $\d\mu_H+\frac{1}{2}[\mu_H,\mu_H]=0$. 
Comparing this to \eqref{eqn:rightactioncurv} proves \eqref{eqn:extderivad}.
Now we compute
\[
\begin{aligned}
\B{R^*\theta}{\d(R^*\theta)}&=\B{\alpha}{\d\alpha}+\B{\alpha}{\d\mu_H}+\B{\mu_H}{\d \alpha}+\B{\mu_H}{\d \mu_H}\\
&=\B{\alpha}{\d\alpha}+\B{\mu_H}{\d \mu_H}+\d\B{\alpha}{\mu_H}+2\B{\alpha}{\d\mu_H}\\
&=\B{\alpha}{\d\alpha}+\B{\mu_H}{\d \mu_H}+\d\B{\alpha}{\mu_H}-\B{\alpha}{[\mu_H,\mu_H]},
\end{aligned}
\]
where the second equality uses \eqref{eq:dB} and the third that $\d\mu_H+\frac{1}{2}[\mu_H,\mu_H]=0$. Using \eqref{eq:B-invar} we also obtain
\[
\B{R^*\theta}{[R^*\theta,R^*\theta]}=\B{\alpha}{[\alpha,\alpha]}+\B{\mu_H}{[\mu_H,\mu_H]}+3\B{\alpha}{[\mu_H,\mu_H]}+3\B{\mu_H}{[\alpha,\alpha]}.
\]
In summary, we thus have
\[
R^*\CS(\theta)=\CS(R^*\theta)=\B{\alpha}{\d\alpha}+\tfrac{1}{3}\B{\alpha}{[\alpha,\alpha]}+
\B{\mu_H}{[\alpha,\alpha]}+\CS(\mu_H)+\d\B{\alpha}{\mu_H}. 
\]
Since the last term already shows up in \eqref{eqn:keyid}, it remains to show that 
\[
\CS(\theta)=\B{\alpha}{\d \alpha}+\tfrac{1}{3}\B{\alpha}{[\alpha,\alpha]}+\B{\mu_H}{[\alpha,\alpha]}.
\] 
Using \eqref{eqn:extderivad} and the $\Ad$-invariance of $\B{\,\cdot\,}{\cdot\,}$ gives
\[
\B{\alpha}{\d\alpha}=-\B{\alpha}{[\mu_H,\alpha]}+\B{\Ad^{-1}\circ\, \theta}{\Ad^{-1}\circ\, \d\theta}=-\B{\alpha}{[\mu_H,\alpha]}+\B{\theta}{\d\theta}.
\]
With $[\Ad(h)(x),\Ad(h)(y)]=\Ad(h)([x,y])$ it follows that
\[
\B{\alpha}{[\alpha,\alpha]}=\B{\Ad^{-1}\circ\, \theta}{\Ad^{-1}\circ\, [\theta,\theta]}=\B{\theta}{[\theta,\theta]}
\]
and we conclude that
\[
\B{\alpha}{\d \alpha}+\tfrac{1}{3}\B{\alpha}{[\alpha,\alpha]}+\B{\mu_H}{[\alpha,\alpha]}=\B{\theta}{\d\theta}+\tfrac{1}{3}\B{\theta}{[\theta,\theta]}=\CS(\theta),
\]
which finishes the proof. 
\end{proof}
\begin{proof}[Proof of \cref{prop:pullbacks}]
Given one global smooth section $\si\col M\to N$, any other global smooth section is of the form $\hat{\sigma}=R\circ (\sigma,h) \col M \to N$ for some smooth map $h \col M \to H$. Using \eqref{eqn:cspullback} and \eqref{eqn:keyid} we thus obtain
\[
\begin{aligned}
\hat{\sigma}^*\CS(\theta)&=(\sigma,h)^*\left(R^*\CS(\theta)\right)\\
&=\CS(\sigma^*\theta)+h^*\CS^{\frak g}(\mu_H)+\d\B{\Ad(h^{-1})\circ(\sigma^*\theta)}{h^*\mu_H},
\end{aligned}
\]
so that integration yields
\begin{equation}\label{eqn:chernsimonschangesection}
c_{\hat{\sigma}}=c_{\sigma}+\int_M h^*\CS^{\frak g}(\mu_H)
\end{equation}
by Stokes' theorem. The claims follow immediately from \eqref{eqn:chernsimonschangesection}. 
\end{proof}

\begin{remark}
Observe that this result does not depend on any conditions on the restriction of
$\B{\,\cdot\,}{\cdot\,}$ to $\frak{h}$, so no assumptions in that direction are
needed in order to get well-defined Chern--Simons invariants. Indeed, for several of
the examples discussed below, the restriction is degenerate.
\end{remark}

\section{Examples of Chern--Simons invariants}\label{1.4}
Recall that any orientable $3$-manifold is parallelizable and hence admits a global smooth frame for the tangent bundle.

(1) Taking $H=G=\mathrm{SO}(3)$, this recovers the original definition of Chern--Simons by using the Levi-Civita connection on oriented Riemannian $3$-manifolds. Starting from a global frame of the tangent bundle $TM$, we can apply Gram-Schmidt to obtain a global orthonormal frame, which shows that the orthonormal frame bundle admits global smooth sections. Since $H^3(G,\R)\cong\R$ we immediately conclude from \cref{prop:pullbacks} that we obtain an invariant with values in $\R/\Z$ provided that we normalize $\B{\,\cdot\,}{\cdot\,}$ in such a way that $\int_{\mathrm{SO}(3)}\mu_{\mathrm{SO}(3)}\in\Z$ and the best choice is to ensure that $\int_{\mathrm{SO}(3)}\mu_{\mathrm{SO}(3)}=\pm 1$. Such a normalization is computed explicitly in \cref{Appendix}.
  
\smallskip

(2) Take $H=G=\mathrm{SO}_0(2,1)$ and the Levi-Civita connection on a space- and time-oriented Lorentzian $3$-manifold. In this case, we have to assume in addition that there is a global orthonormal frame for the given Lorentzian metric, i.e.\ that the orthonormal frame bundle admits a global smooth section. Since the maximal compact subgroup of $G$ is contained in $\mathrm{S}(\mathrm{O}(2)\x \mathrm{O}(1))$, we get $H^3(G,\R)=\{0\}$ in this case. Hence regardless of the normalization of $\B{\,\cdot\,}{\cdot\,}$, we get an $\R$-valued invariant (assuming existence of a global orthonormal frame). As we shall see below, the normalization of $\B{\,\cdot\,}{\cdot\,}$ still can be relevant here, since there are integrality results for the invariant on manifolds that admit certain isometric immersions, see \cref{prop:Lorentz-flat-ext} below.

\smallskip

(3) Take $H=G=\mathrm{SL}(3,\R)$ and consider oriented $3$-manifolds $M$ endowed with  a fixed volume form $\nu$ and a linear connection $\nabla$ on $TM$ preserving $\nu$. Then we can apply our construction to the principal connection induced by $\nabla$ on the $\mathrm{SL}(3,\R)$-frame bundle of $M$ defined by $\nu$. The inclusion $\mathrm{SO}(3)\to \mathrm{SL}(3,\R)$ induces an isomorphism in cohomology, so $H^3(H,\R)\cong\Z$. Choosing $\B{\,\cdot\,}{\cdot\,}$ appropriately, we obtain an $\R/\Z$-valued invariant for such connections.

\smallskip

(4) An example with $H\neq G$ is provided by the Burns-Epstein invariant introduced in \cite{MR0936085}. Here the setting is that $M$ is a compact oriented $3$-manifold endowed with a CR-structure, i.e.\ a contact distribution $C\subset TM$ which is endowed with an almost complex structure. It is a classical result due to E.\ Cartan, see \cite{MR1556687}, that $M$ admits a canonical $\frak{g}$-connection with $\frak{g}=\frak{su}(2,1)$ on a principal fibre bundle constructed from the CR structure. Indeed, this is a Cartan connection in modern terminology.

The structure group $H$ of the canonical principal bundle is a (parabolic) subgroup of $G=\mathrm{PSU}(2,1)$. The details on $H$ are not very important here, it comes from the stabilizer in $\mathrm{SU}(2,1)$ of a null line in $\mathbb C^3$. It turns out that $H$ is isomorphic to a semi-direct product of $\mathrm{U}(1)$ and the complex Heisenberg group of real-dimension $3$, which in particular implies that $H^3(H,\R)=\{0\}$.

To obtain an invariant, one has to assume that the $H$-principal bundle associated to the CR structure admits a global section, which by orientability of $M$ turns out to be equivalent to triviality of the CR subbundle $C\subset TM$. Equivalently, this can be expressed as existence of a global CR vector field on $M$. Since $\frak{g}$ is simple, the form $\B{\,\cdot\,}{\cdot\,}$ has to be a non-zero multiple of the Killing form, and for each choice of such a form, our construction leads to a real-valued CR invariant. Notice that in this case the restriction of $\B{\,\cdot\,}{\cdot\,}$ to $\frak{h}$ is degenerate with null-space the nilradical of $\frak{h}$.

\smallskip

(5) The construction of canonical principal bundles and (Cartan) connections from (4) is a special case of the general constructions for parabolic geometries, see \cite{MR2532439}. In particular, there are two more cases of structures on $3$-manifolds that have an underlying contact structure. One of those are Legendrian contact structures for which the additional ingredient to a contact distribution $C\subset TM$ is a decomposition $C=E\oplus F$ as the direct sum of two  line subbundles (which are automatically Legendrian). Here $\frak{g}=\frak{sl}(3,\R)$ and $H$ comes from the subgroup of upper triangular matrices in $\mathrm{SL}(3,\R)$, so $H^3(H,\R)=\{0\}$. To obtain an invariant, one again has to assume that $M$ is compact and oriented and that the Legendrian subbundles $E$ and $F$ are trivial, and then one obtains an $\R$-valued invariant.

The other example are so-called contact projective structures, see also \cite{MR2189678}. Here the additional structure is given by a family of curves tangent to the contact distribution $C\subset TM$ with one curve through each point in each direction (in $C$), which can be realized as geodesics of a linear connection. For this example, $\frak{g}=\frak{sp}(4,\R)$ and $H$ comes from a subgroup of $\mathrm{Sp}(4,\R)$ which is a semi-direct product of $\mathrm{Sp}(2,\R)\cong \mathrm{SL}(2,\R)$ with a $3$-dimensional real Heisenberg group. Hence $H^3(H,\R)=\{0\}$ and assuming triviality of the canonical principal bundle over a compact oriented $3$-manifold, one thus obtains a real-valued invariant for any choice of $\B{\,\cdot\,}{\cdot\,}$.

\section{Partial blindness for flat Lie algebra valued forms}\label{sect:partialblindness}

It is natural to try to find conditions on the pair $(\pi \col P \to M,\theta)$ which  imply vanishing of the associated Chern--Simons invariant. We will achieve this in \cref{sect:flatextensionGequlasH} below. Our result crucially relies on a certain algebraic feature of the Chern--Simons $3$-form that we will refer to as \emph{partial blindness for flat Lie algebra valued forms}.

Here our basic setup is that we study Chern--Simons invariants associated to $(\frak{g},\B{\,\cdot\,}{\cdot\,})$ via a realization of $\frak{g}$ as a Lie subalgebra of a bigger Lie algebra $\tilde{\mathfrak{g}}$ in such a way that $\B{\,\cdot\,}{\cdot\,}$ is the restriction of an invariant form on $\tilde{\mathfrak{g}}$ which we denote by the same symbol. In particular we \textit{do} assume that the restriction to $\frak{g}$ is non-degenerate here. (Note that this is automatically satisfied if $\mathfrak{g}$ is simple and the restriction is non-zero.)
Then we can form the orthogonal space $\mathfrak{g}^\perp\subset\mathfrak{\tilde{g}}$
which by our assumption is complementary to $\mathfrak{g}$, so
$\mathfrak{\tilde{g}}=\mathfrak{g}\oplus\mathfrak{g}^\perp$ as a vector space. But
since $\B{\,\cdot\,}{\cdot\,}$ is clearly $\mathfrak{g}$-invariant, also $\frak
g^\perp\subset\mathfrak{\tilde{g}}$ is a $\mathfrak{g}$-invariant subspace, so
$\mathfrak{\tilde{g}}=\frak{g}\oplus\mathfrak{g}^\perp$ as a representation of
$\mathfrak{g}$. In particular, this implies that
$[\mathfrak{g},\mathfrak{g}^\perp]\subset\mathfrak{g}^\perp$. An important special
case is that the complement $\mathfrak{g}^\perp$ makes
$(\mathfrak{\tilde{g}},\mathfrak{g})$ into a \emph{symmetric pair}, i.e., that in
addition we have $[\mathfrak{g}^\perp,\mathfrak{g}^\perp]\subset\mathfrak{g}$.

Now any $\mathfrak{\tilde{g}}$-valued form $\om\in\Om^k(N,\mathfrak{\tilde{g}})$ on a
manifold $N$ decomposes accordingly as
\[
\om=\om^\top+\om^\perp
\] 
with $\om^\top\in\Om^k(N,\mathfrak{g})$ and
$\om^{\perp}\in\Om^k(N,\mathfrak{g}^\perp)$.

We can now state a crucial technical lemma:

\begin{lemma}\label{lem:CS-pairs}
Let $\th\in\Om^1(N,\mathfrak{\tilde{g}})$ be such that for the decomposition
$\th=\th^\top+\th^\perp$, we have
$[\th^\perp,\th^\perp]\in\Om^2(N,\mathfrak{g})$. Then for
$\Theta=\d\th+\frac12[\th,\th]$ with associated decomposition
$\Theta=\Theta^\top+\Theta^\perp$, we obtain
\begin{equation}\label{eqn:partialblindness}
\CS(\th)=\CS(\th^\top)+\B{\th^\perp}{\Theta^\perp}. 
\end{equation}
\end{lemma}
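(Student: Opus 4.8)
The plan is to expand $\CS(\th)$ straight from \cref{def:CS}, substitute the decomposition $\th=\th^\top+\th^\perp$, and then reduce everything using two structural facts: the orthogonality of $\frak g$ and $\frak g^\perp$ with respect to $\B{\,\cdot\,}{\cdot\,}$, and the bracket relations. Writing $a\deq\th^\top$ and $b\deq\th^\perp$ for brevity, I first record how the ingredients decompose. Here $\d a$ is $\frak g$-valued and $\d b$ is $\frak g^\perp$-valued, while for two $1$-forms the bracket is symmetric, so $[\th,\th]=[a,a]+2[a,b]+[b,b]$. The general relation $[\frak g,\frak g^\perp]\subset\frak g^\perp$ (noted in \cref{sect:partialblindness}) shows $[a,b]$ is $\frak g^\perp$-valued, $[a,a]$ is $\frak g$-valued, and the hypothesis $[\th^\perp,\th^\perp]\in\Om^2(N,\frak g)$ puts $[b,b]$ into $\frak g$. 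Hence $[\th,\th]^\perp=2[a,b]$, which together with $(\d\th)^\perp=\d b$ yields the decomposition
\[
\Theta^\perp=\d b+[a,b].
\]

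Next I would compute the two pairings in $\CS(\th)$, discarding by orthogonality every term that pairs a $\frak g$-valued factor against a $\frak g^\perp$-valued one. For the first term this leaves $\B{\th}{\d\th}=\B{a}{\d a}+\B{b}{\d b}$, and for the cubic term it leaves
\[
\B{\th}{[\th,\th]}=\B{a}{[a,a]}+\B{a}{[b,b]}+2\B{b}{[a,b]},
\]
since the surviving contributions are exactly those whose two factors lie in the same summand.

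The key step is to see that the two mixed cubic contributions collapse. Applying the invariance identity \eqref{eq:B-invar} with $(\om,\tau,\th)=(a,b,b)$ gives $\B{a}{[b,b]}=\B{[a,b]}{b}$, and the symmetry $\B{\om}{\th}=(-1)^{pq}\B{\th}{\om}$ for a $2$-form against a $1$-form turns this into $\B{b}{[a,b]}$. Thus $\B{a}{[b,b]}+2\B{b}{[a,b]}=3\B{b}{[a,b]}$, so $\tfrac13\B{\th}{[\th,\th]}=\tfrac13\B{a}{[a,a]}+\B{b}{[a,b]}$. Assembling the pieces and grouping the $\frak g$-valued terms into $\CS(\th^\top)=\B{a}{\d a}+\tfrac13\B{a}{[a,a]}$, I obtain
\[
\CS(\th)=\CS(\th^\top)+\B{b}{\d b}+\B{b}{[a,b]}=\CS(\th^\top)+\B{b}{\d b+[a,b]},
\]
and the bracket on the right is precisely $\Theta^\perp$, which gives \eqref{eqn:partialblindness}.

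The only genuine obstacle is the sign and degree bookkeeping in the step that merges the mixed terms: one must feed \eqref{eq:B-invar} the factors in the correct order and correctly track the Koszul sign arising when pairing the $1$-form $b$ against the $2$-form $[a,b]$, as well as the symmetry $[a,b]=[b,a]$ that holds for $1$-forms. Once these signs are pinned down, the rest is bookkeeping with the orthogonal splitting, and no further input beyond the hypothesis on $[\th^\perp,\th^\perp]$ is needed.
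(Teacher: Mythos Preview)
Your proof is correct and follows essentially the same approach as the paper: expand $\CS(\th)$ using the orthogonal splitting, then use the invariance identity \eqref{eq:B-invar} to merge the mixed cubic terms. The only cosmetic difference is that the paper substitutes $(\d\th)^\perp=\Theta^\perp-[\th^\top,\th^\perp]$ at the start so that $\Theta^\perp$ appears immediately, whereas you compute $\B{\th}{\d\th}$ directly and recognise $\d b+[a,b]=\Theta^\perp$ only at the end.
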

\begin{proof}
We can decompose the equation $\d\th=\Theta-\frac12[\th,\th]$ into components, and by
our assumption on $[\th^\perp,\th^\perp]$, this reads as
\begin{equation}\label{eq:dth-decomp}
\begin{aligned}
(\d\th)^\top&=\Theta^\top-\tfrac12([\th^\top,\th^\top]+[\th^\perp,\th^\perp]),\\
(\d\th)^\perp&=\Theta^\perp-[\th^\top,\th^\perp].
\end{aligned}
\end{equation}
From this we compute
\[
\B{\theta}{\d\theta}=\B{\theta^{\top}+\theta^{\perp}}{(\d\theta)^{\top}+(\d\theta)^{\perp}}=\B{\theta^{\top}}{\d\theta^{\top}}-\B{\theta^{\perp}}{[\theta^{\top},\theta^{\perp}]}+\B{\theta^{\perp}}{\Theta^{\perp}}
\]
and
\[
\begin{aligned}
\B{\theta}{[\theta,\theta]}&=\B{\theta^{\top}+\theta^{\perp}}{[\theta^{\top}+\theta^{\perp},\theta^{\top}+\theta^{\perp}]}\\
&=\B{\theta^{\top}+\theta^{\perp}}{[\theta^{\top},\theta^{\top}]+[\theta^{\perp},\theta^{\perp}]+2[\theta^{\top},\theta^{\perp}]}\\
&=\B{\theta^{\top}}{[\theta^{\top},\theta^{\top}]}+\B{\theta^{\top}}{[\theta^{\perp},\theta^{\perp}]}+2\B{\theta^{\perp}}{[\theta^{\top},\theta^{\perp}]}.
\end{aligned}
\]
In total we thus have
\[
\begin{aligned}
\CS(\theta)&=\B{\theta}{\d\theta}+\tfrac{1}{3}\B{\theta}{[\theta,\theta]}\\
&=\CS(\theta^{\top})+\B{\theta^{\perp}}{\Theta^{\perp}}+\tfrac{1}{3}\B{\theta^{\top}}{[\theta^{\perp},\theta^{\perp}]}-\tfrac{1}{3}\B{\theta^{\perp}}{[\theta^{\top},\theta^{\perp}]}\\
&=\CS(\theta^{\top})+\B{\theta^{\perp}}{\Theta^{\perp}},
\end{aligned}
\]
where the last equality uses \eqref{eq:B-invar}. 
\end{proof}

\begin{remark}
In the applications of \cref{lem:CS-pairs} that we are interested in, we consider the case where the Lie algebra valued form $\theta$ is \emph{flat}, that is, satisfies the Maurer--Cartan equation $\Theta=\d\theta+\tfrac{1}{2}[\theta,\theta]=0$. In this case \eqref{eqn:partialblindness} simplifies to $\CS(\theta)=\CS(\theta^{\top})$, that is, the Chern--Simons $3$-form is blind to the $\theta^{\perp}$ component of $\theta$. 
\end{remark}

 As a first application of \cref{lem:CS-pairs} we assume that we have an inclusion
$i\col G\to\tilde G$ of groups corresponding to $\frak{g}\subset\tilde{\frak{g}}$ and get a
result on integrality of the Chern--Simons forms associated to the Maurer--Cartan
forms.
\begin{lemma}\label{lem:CS-subgroup}
Let $i\col G\to\tilde G$ be an inclusion of a Lie subgroup and consider the corresponding
subalgebra $\frak{g}\subset\tilde{\frak{g}}$. Assume that $\B{\,\cdot\,}{\cdot\,}$ is an
invariant bilinear form on $\tilde{\frak{g}}$ such that $\B{\,\cdot\,}{\cdot\,}|_{\frak{g}\x\frak
g}$ is non-degenerate.

If $\B{\,\cdot\,}{\cdot\,}$ is normalized in such a way that $[\CS(\mu_{\tilde G})]\in
H^3(\tilde G,\Z)$, then $[\CS^{\frak{g}}(\mu_G)]\in H^3(G,\Z)$.
\end{lemma}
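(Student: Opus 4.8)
The plan is to transfer integrality along the inclusion $i\col G\to\tilde G$, exploiting the naturality of both the Chern--Simons construction and of the comparison between integral and real cohomology. The starting observation is elementary: since $G\subset\tilde G$ is a Lie subgroup, the Maurer--Cartan forms are compatible, $i^*\mu_{\tilde G}=\mu_G$, where the $\frak g$-valued form $\mu_G$ is regarded as taking values in $\tfg$ via $\frak g\subset\tfg$. In particular, in the decomposition $\tfg=\frak g\oplus\frak g^\perp$ the pulled-back form has vanishing $\perp$-component, so $(i^*\mu_{\tilde G})^\top=\mu_G$ and $(i^*\mu_{\tilde G})^\perp=0$.

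Next I would compute the pullback of the Chern--Simons form. By the naturality \eqref{eqn:cspullback} of $\CS$ we have $i^*\CS(\mu_{\tilde G})=\CS(i^*\mu_{\tilde G})=\CS(\mu_G)$, where the right-hand side is computed as a $\tfg$-valued form using $\B{\,\cdot\,}{\cdot\,}$ on $\tfg$. Since $\mu_G$ satisfies the Maurer--Cartan equation and has trivial $\perp$-component, the hypothesis $[\mu_G^\perp,\mu_G^\perp]\in\Om^2(G,\frak g)$ of \cref{lem:CS-pairs} holds trivially, and the lemma gives $\CS(\mu_G)=\CS(\mu_G^\top)=\CS^{\frak g}(\mu_G)$; equivalently, one sees directly that every pairing occurring in $\CS(\mu_G)$ involves only $\frak g$-values, so it agrees with the corresponding pairing computed using $\B{\,\cdot\,}{\cdot\,}|_{\frak g\x\frak g}$. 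Passing to de Rham cohomology, this yields
\[
i^*[\CS(\mu_{\tilde G})]=[\CS^{\frak g}(\mu_G)]\in H^3(G,\R).
\]

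Finally I would invoke the functoriality of the coefficient homomorphism $H^3(\,\cdot\,,\Z)\to H^3(\,\cdot\,,\R)$ with respect to the smooth map $i$. The normalization hypothesis means that $[\CS(\mu_{\tilde G})]$ lies in the image of $H^3(\tilde G,\Z)\to H^3(\tilde G,\R)$, so there is a class $\lambda\in H^3(\tilde G,\Z)$ mapping to it. Naturality provides a commutative square relating $i^*$ on integral and on real cohomology, whence $i^*\lambda\in H^3(G,\Z)$ maps to $i^*[\CS(\mu_{\tilde G})]=[\CS^{\frak g}(\mu_G)]$. Thus $[\CS^{\frak g}(\mu_G)]$ lies in the image of $H^3(G,\Z)\to H^3(G,\R)$, which is exactly the assertion.

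The first two steps (identifying $i^*\mu_{\tilde G}$ and reducing $\CS$ to the restricted pairing) are routine. The conceptual heart of the argument, and the point that deserves care, is the last step: one must know that pullback along $i$ respects the image of integral cohomology inside real cohomology. This is precisely the naturality of the comparison map $H^3(-,\Z)\to H^3(-,\R)$ for the continuous map $i$, which holds because singular cohomology is functorial and the de Rham isomorphism is natural for smooth maps.
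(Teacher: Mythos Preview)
Your proof is correct and follows the same overall route as the paper: identify $i^*\mu_{\tilde G}$ with $\mu_G$, conclude that $i^*\CS(\mu_{\tilde G})=\CS^{\frak g}(\mu_G)$, and invoke that pullback along $i$ preserves integral cohomology classes. The one difference is the order of operations. The paper first applies \cref{lem:CS-pairs} to $\mu_{\tilde G}$ on $\tilde G$ to obtain $\CS(\mu_{\tilde G})=\CS(\mu_{\tilde G}^\top)$ and then pulls back by $i$; you instead pull back first, so that $(i^*\mu_{\tilde G})^\perp=0$ and the reduction to $\CS^{\frak g}(\mu_G)$ is immediate (with \cref{lem:CS-pairs} either trivial or unnecessary). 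Your order is in fact the cleaner one: invoking \cref{lem:CS-pairs} on $\tilde G$ requires its hypothesis $[\mu_{\tilde G}^\perp,\mu_{\tilde G}^\perp]\in\Om^2(\tilde G,\frak g)$, which is the symmetric-pair condition $[\frak g^\perp,\frak g^\perp]\subset\frak g$ and is not assumed in the lemma. By pulling back first you sidestep this point entirely, so your argument works in the stated generality without further comment.
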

\begin{proof}
Decomposing $\mu_{\tilde G}=\mu_{\tilde G}^\top+\mu_{\tilde G}^\perp$, the
definition of the Maurer--Cartan form readily implies that $i^*\mu_{\tilde
G}^\top=\mu_G$. Since $\mu_{\tilde G}$ satisfies the Maurer--Cartan equation,
\cref{lem:CS-pairs} gives that $\CS(\mu_{\tilde G})=\CS(\mu_{\tilde
G}^\top)$ and using \eqref{eqn:cspullback} we conclude that $i^*\CS(\mu_{\tilde
G}^\top)=\CS^{\tilde{\frak{g}}}(i^*\mu_{\tilde G}^\top)$. Since we use the
restriction of $\B{\,\cdot\,}{\cdot\,}$ on $\frak{g}$, this coincides with $\CS^{\frak
g}(\mu_G)$. Thus the result follows from the fact that pullbacks preserve
integral cohomology classes.
\end{proof}

There is a conceptual way to obtain compatible invariant bilinear forms as we need
them here. Assume that $\tilde{\frak{g}}$ is realized as a subalgebra of
$\frak{gl}(n,\R)$ for some $n$. Then it is well-known that the \textit{trace form}
$\B{X}{Y}\deq\tr(XY)$ defines a $\mathrm{GL}(n,\R)$-invariant bilinear form on
$\frak{gl}(n,\R)$. Of course, the restriction of $\B{\,\cdot\,}{\cdot\,}$ to $\tilde{\frak{g}}$
is the $\tilde G$-invariant for the subgroup $\tilde G\subset \mathrm{GL}(n,\R)$ corresponding
to the Lie subalgebra $\tilde{\frak{g}}$. If $\tilde{\frak g}$ is simple and the
restriction is non-zero, then it has to be a multiple of the Killing form and hence
is non-degenerate. Of course, we can restrict further to
$\frak{g}\subset\tilde{\frak{g}}$ and apply the same argument if $\frak{g}$ is also
simple.

Even better, for $m>n$, we can decompose $\R^m=\R^{m-n}\oplus\R^n$ and
then include $\mathrm{GL}(n,\R)\subset \mathrm{GL}(m,\R)$ as the maps that send $\R^n$ to itself and
are the identity on the complementary subspace $\R^{m-n}$. For the infinitesimal
inclusion $\frak{gl}(n,\R)\to\frak{gl}(m,\R)$ the trace form on $\frak{gl}(m,\R)$
evidently restricts to the trace form on $\frak{gl}(n,\Bbb R)$. This allows us to
also compare algebras of matrices of different sizes.

It is easy to write an explicit formula for Chern--Simons forms on subalgebras of
$\frak{gl}(n,\R)$ computed with respect to the trace form. One realizes a
$1$-form $\th$ on $N$ with values in such a subalgebra as a matrix
$(\th^i_j)_{i,j=1}^n$ of real-valued $1$-forms $\th^i_j\in\Om^1(N)$. Then of course,
we get $\d\th=(\d\th^i_j)_{i,j=1}^n$, and employing the summation convention, we can
write $[\th,\th]=(2\th^i_k\wedge\th^k_j)_{i,j=1}^n$. By definition of the Chern--Simons
form, this implies that we get (still using the summation convention)
\[
\CS(\th)=\th^i_j\wedge\d\th^j_i+\tfrac{2}{3}\th^i_j\wedge\th^j_k\wedge\th^k_i. 
\]
It is usual in Chern--Simons theory to write this as $\tr(\th\wedge
\d\th+\tfrac{2}{3}\th\wedge\th\wedge\th)$. Using this, we can prove a result on the
Lie algebras we will need in the further developments: 

\begin{prop}\label{prop:sl-so}
For $n\geqslant 3$, consider the subgroups $G_n\deq\mathrm{SO}(n,\R)\subset\tilde G_n\deq\mathrm{SL}(n,\Bbb
R)\subset \mathrm{GL}(n,\R)$ and denote the Maurer--Cartan forms by $\mu_n=\mu_{G_n}$
and $\tilde\mu_n=\mu_{\tilde G_n}$. Then for $C_n\in\R$ the statements 
\begin{gather}
C_n\tr\left(\mu_n\wedge \d\mu_n+\frac{2}{3}\mu_n\wedge\mu_n\wedge\mu_n\right)\in
H^3(G_n,\Z)\label{mu_so}\\ C_n\tr\left(\tilde\mu_n\wedge
\d\tilde\mu_n+\frac{2}{3}\tilde\mu_n\wedge\tilde\mu_n\wedge\tilde\mu_n\right)\in
H^3(\tilde G_n,\Z)\label{mu_sl}.
\end{gather}
are equivalent and for any $N\geqslant 5$, there exists a constant $C$ such that they hold
with $C_n=C$ for all $n\leqslant N$.
\end{prop}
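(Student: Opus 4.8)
The plan is to treat the proposition in two stages: first the equivalence of \eqref{mu_so} and \eqref{mu_sl} for each fixed $n$, and then the existence of a single constant $C$ valid for all $n\leqslant N$. Throughout I work with the trace form on $\mathfrak{gl}(n,\R)$, whose restriction to $\mathfrak{so}(n,\R)$ is negative definite, hence non-degenerate, so that the hypotheses of \cref{lem:CS-subgroup} are met; I write $\CS(\mu_n)$ and $\CS(\tilde\mu_n)$ for the trace-form Chern--Simons $3$-forms appearing in \eqref{mu_so} and \eqref{mu_sl}. Moreover $\mathfrak{sl}(n,\R)=\mathfrak{so}(n,\R)\oplus\mathfrak p$, where $\mathfrak p$ denotes the traceless symmetric matrices; since skew and symmetric matrices are orthogonal under the trace form and $[\mathfrak p,\mathfrak p]\subset\mathfrak{so}(n,\R)$, the pair $(\mathfrak{sl}(n,\R),\mathfrak{so}(n,\R))$ is a symmetric pair with $\mathfrak{so}(n,\R)^\perp=\mathfrak p$. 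Writing $i\col \mathrm{SO}(n,\R)\into \mathrm{SL}(n,\R)$ for the inclusion, the defining property of the Maurer--Cartan form gives $i^*\tilde\mu_n^\top=\mu_n$, and since $\tilde\mu_n$ is flat, \cref{lem:CS-pairs} yields $\CS(\tilde\mu_n)=\CS(\tilde\mu_n^\top)$; combining this with \eqref{eqn:cspullback} gives the cohomological identity $i^*[\CS(\tilde\mu_n)]=[\CS(\mu_n)]$ in $H^3(\mathrm{SO}(n,\R),\R)$.

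For the equivalence I would invoke the topological fact that, by the polar (Iwasawa) decomposition, $\mathrm{SO}(n,\R)$ is a deformation retract of $\mathrm{SL}(n,\R)$, so $i$ is a homotopy equivalence; in particular $i^*$ is an isomorphism on both $H^3(\,\cdot\,,\Z)$ and $H^3(\,\cdot\,,\R)$, compatibly with the change-of-coefficients map $H^3(\,\cdot\,,\Z)\to H^3(\,\cdot\,,\R)$. Chasing the resulting commutative square and using $i^*[\CS(\tilde\mu_n)]=[\CS(\mu_n)]$, a real class $C_n[\CS(\tilde\mu_n)]$ lies in the image of integral cohomology precisely when its image $C_n[\CS(\mu_n)]$ does, which is exactly the equivalence of \eqref{mu_sl} and \eqref{mu_so}. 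The implication \eqref{mu_sl}$\Rightarrow$\eqref{mu_so} is already contained in \cref{lem:CS-subgroup}; the converse is where the isomorphism of $i^*$ on integral cohomology is genuinely used.

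For the uniform constant I would exploit the stabilization inclusions. For $3\leqslant n\leqslant N$ let $j_n\col \mathrm{SO}(n,\R)\into \mathrm{SO}(N,\R)$ be the block embedding fixing the complementary $\R^{N-n}$; as noted in the text the trace form restricts to the trace form and $j_n^*\mu_N=\mu_n$, so \eqref{eqn:cspullback} gives $j_n^*[\CS(\mu_N)]=[\CS(\mu_n)]$, and $j_n^*$ carries integral classes to integral classes. It therefore suffices to choose $C$ so that $C[\CS(\mu_N)]$ is integral: then $C[\CS(\mu_n)]=j_n^*\bigl(C[\CS(\mu_N)]\bigr)$ is integral for every $n\leqslant N$, giving \eqref{mu_so}, and hence \eqref{mu_sl}, with $C_n=C$. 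Such a $C$ exists because for $N\geqslant 5$ the group $\mathrm{SO}(N,\R)$ is compact and simple, so $H^3(\mathrm{SO}(N,\R),\R)\cong\R$ and the image of $H^3(\mathrm{SO}(N,\R),\Z)$ is a rank-one lattice inside it; as $[\CS(\mu_N)]$ is a nonzero element of this line, a single real rescaling lands it in the lattice.

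The main obstacle is the topological input, and it is precisely what pins down the hypothesis $N\geqslant 5$. The rescaling argument requires $H^3$ of the top group to be one-dimensional, which fails for the semisimple group $\mathrm{SO}(4,\R)$ (there $H^3\cong\R^2$, and the line $\R\cdot[\CS(\mu_4)]$ need not meet the rank-two lattice away from $0$); passing to a simple $\mathrm{SO}(N,\R)$ with $N\geqslant 5$ repairs this, and the troublesome case $n=4$ is then recovered for free by pulling back along $j_4$ rather than being treated as a base case. The remaining ingredients --- that $i$ is a homotopy equivalence and that a compact simple Lie group has $H^3(\,\cdot\,,\R)\cong\R$ generated by the Cartan $3$-form --- are standard, so no serious computation is needed beyond the symmetric-pair bookkeeping already furnished by \cref{lem:CS-pairs}.
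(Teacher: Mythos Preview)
Your argument is correct and follows essentially the same route as the paper: both use that $\mathrm{SO}(n)\hookrightarrow\mathrm{SL}(n)$ is a homotopy equivalence (Iwasawa/polar decomposition) together with $i^*[\CS(\tilde\mu_n)]=[\CS(\mu_n)]$ to get the equivalence, and both pick the normalizing constant at the top of a stabilization tower where $H^3$ is one-dimensional and then pull back. The only cosmetic differences are that the paper anchors the tower at $\tilde G_N=\mathrm{SL}(N,\R)$ and descends via \cref{lem:CS-subgroup}, whereas you anchor at $\mathrm{SO}(N)$ and pull back along $j_n$; and your detour through the symmetric-pair identity $\CS(\tilde\mu_n)=\CS(\tilde\mu_n^\top)$ is not actually needed, since already $i^*\tilde\mu_n=\mu_n$ and naturality \eqref{eqn:cspullback} give $i^*\CS(\tilde\mu_n)=\CS(\mu_n)$ directly.
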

\begin{proof}
It is well-known that for both $\frak{g}_n=\frak{so}(n)$ and $\tilde{\frak
g}=\frak{sl}(n,\R)$ the trace-form is a non-zero multiple of the Killing
form, see e.g.\ \cite{MR1153249}, so it is always non-degenerate. It is also well
known that for $n\geqslant 2$, $G_n\subset\tilde G_n$ is a strong deformation retract by
Iwasawa decomposition, so the inclusion induces an isomorphism in cohomology with
both real and integer coefficients. Finally, for $n\neq 4$, $H^3(\tilde G_n,\Bbb
R)$ is $1$-dimensional, so we can choose $C_n$ in such a way that \eqref{mu_sl}
holds for $n$. But then the full result immediately follows from
\cref{lem:CS-subgroup}.
\end{proof}
 The explicit normalizations we will use are computed in \cref{Appendix}
below.

\section{Flat extensions of principal
connections}\label{sect:flatextensionGequlasH}

 We continue to work in the setting of an inclusion $i\col G\to\tilde G$ of
a Lie subgroup and an invariant bilinear form $\B{\,\cdot\,}{\cdot\,}$ on $\tilde{\frak{g}}$
whose restriction to $\frak{g}$ is non-degenerate. All Chern--Simons forms will be
formed with respect to this bilinear form from now on. In this setting recall from \cref{defn:flatext} our key concept of a flat extension:

\begin{defNON}\label{def:flat-ext}
Let $\pi : P \to M$ be a principal $G$-bundle and $\theta \in \Omega^1(P,\mathfrak{g})$ a connection. A \emph{flat extension of $\theta$ of type $(\tilde{G},G)$} is a bundle homomorphism $F : P \to \tilde{G}$ into the total space of the principal $G$-bundle $\tilde{G} \to \tilde{G}/G$ so that $\theta=F^*(\mu_{\tilde{G}}^{\top})$.
\end{defNON}

\begin{remark}\label{rem:flat-ext}
The concept of flat extensions as defined here makes sense also in the case of a $\frak g$-connection on an $H$-principal bundle with $H\subset G$ as in \cref{1.3}. We restrict to the case $G=H$ here, however, since for $H\neq G$, integrality of the cohomology class $[\CS(\mu_{\tilde G})]$ does not imply integrality of $[\CS^{\frak g}(\mu_H)]$, so additional assumptions are needed to obtain a general result on integrality or vanishing of Chern--Simons invariants in that setting. \end{remark}

\begin{thm}\label{thm:main-flat}
Consider a pair $G\subset\widetilde{G}$ and a bilinear form $\B{\,\cdot\,}{\cdot\,}$ on
$\tfg$ which is normalized in such a way that $[\CS(\mu_{\tilde G})]\in
H^3(\tilde G,\Z)$. Let $\pi\col P\to M$ be a principal $G$-bundle and
$\theta\in\Om^1(P,\frak{g})$ a $G$-equivariant form.

Suppose that $\th$ admits a flat extension of type $(\widetilde{G},G)$
such that for the corresponding map $F:P\to\tilde G$, we get 
\begin{equation}\label{cond:flatextcond}
[F^*(\mu_{\tilde G}^\perp),F^*(\mu_{\tilde G}^\perp)]\in\Om^2(P,\frak{g}).
\end{equation}
Then for any global smooth section $\si$ of $P$, we have $\int_M\si^*\CS(\theta)\in \Bbb
Z$. If $\CS(\mu_{\widetilde{G}})$ is exact, then we even have
$\int_M\si^*\CS(\theta)=0$.
\end{thm}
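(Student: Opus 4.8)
The plan is to recognize the pulled-back Maurer--Cartan form $\psi\deq F^*\mu_{\tilde G}$ as a \emph{flat} $\tfg$-valued form to which \cref{lem:CS-pairs} applies, thereby reducing the Chern--Simons form of $\theta$ to a pullback of $\CS(\mu_{\tilde G})$. First I would note that since $\mu_{\tilde G}$ satisfies the Maurer--Cartan equation and pullback commutes with $\d$ and with $[\,\cdot\,,\cdot\,]$, the form $\psi$ is itself flat, i.e.\ $\Psi\deq\d\psi+\tfrac12[\psi,\psi]=0$. Because the decomposition $\tfg=\frak g\oplus\frak g^\perp$ is $\R$-linear on coefficients, pullback respects it, so $\psi^\top=F^*(\mu_{\tilde G}^\top)=\theta$ and $\psi^\perp=F^*(\mu_{\tilde G}^\perp)$. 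In particular, hypothesis \eqref{cond:flatextcond} is precisely the condition $[\psi^\perp,\psi^\perp]\in\Om^2(P,\frak g)$ required by \cref{lem:CS-pairs}.

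Applying that lemma to $\psi$ and using $\Psi=0$ (hence $\Psi^\perp=0$) gives $\CS(\psi)=\CS(\psi^\top)=\CS(\theta)$. On the other hand, naturality \eqref{eqn:cspullback} yields $\CS(\psi)=F^*\CS(\mu_{\tilde G})$. Combining these gives the key identity $\CS(\theta)=F^*\CS(\mu_{\tilde G})$. Since $\mu_{\tilde G}$ is flat, $\CS(\mu_{\tilde G})$ is closed by \eqref{eqn:extdifcs}, and hence so is $\CS(\theta)$.

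Now, for a global smooth section $\sigma\col M\to P$, set $f\deq F\o\sigma\col M\to\tilde G$. Then $\sigma^*\CS(\theta)=f^*\CS(\mu_{\tilde G})$, so $\int_M\sigma^*\CS(\theta)=\int_M f^*\CS(\mu_{\tilde G})$. For the integrality statement I would use that $[\CS(\mu_{\tilde G})]$ lies in the image of $H^3(\tilde G,\Z)\to H^3(\tilde G,\R)$ by hypothesis; since $f^*$ is compatible with the change-of-coefficients map, $[f^*\CS(\mu_{\tilde G})]$ lies in the image of $H^3(M,\Z)$, and the integral of a closed form representing an integral de Rham class over the closed oriented $3$-manifold $M$ equals the pairing of that class with the fundamental class $[M]$, hence is an integer. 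For the exactness statement, if $\CS(\mu_{\tilde G})=\d\eta$ then $f^*\CS(\mu_{\tilde G})=\d(f^*\eta)$ is exact, so Stokes' theorem gives $\int_M\sigma^*\CS(\theta)=0$.

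The bulk of the difficulty has already been absorbed into the partial-blindness identity of \cref{lem:CS-pairs}, so this proof is essentially an assembly of that identity with naturality and with the cohomological normalization of $\B{\,\cdot\,}{\cdot\,}$. The one step deserving genuine care is the passage from ``$[f^*\CS(\mu_{\tilde G})]$ is an integral de Rham class'' to ``its integral over $M$ is an integer'': this rests on the standard fact that integration against $[M]$ sends integral cohomology to $\Z$, together with the compatibility of $f^*$ with integral rather than merely real coefficients. Beyond keeping the $\top/\perp$ bookkeeping straight and observing that $F\o\sigma$ is globally defined because $\sigma$ is a global section of $P$, I do not anticipate a substantive obstacle.
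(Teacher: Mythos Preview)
Your proposal is correct and follows essentially the same route as the paper's proof: both apply \cref{lem:CS-pairs} to the flat form $F^*\mu_{\tilde G}$ to obtain $\CS(\theta)=F^*\CS(\mu_{\tilde G})$, then pull back along $F\circ\sigma$ and invoke integrality of the class $[\CS(\mu_{\tilde G})]$. The paper's version additionally remarks (via \cref{lem:CS-subgroup}) that the normalization hypothesis also forces $[\CS(\mu_G)]\in H^3(G,\Z)$, so that the $\R/\Z$-valued invariant of \cref{prop:pullbacks} is defined; this is context rather than a logical ingredient of the stated conclusion, so its absence from your argument is not a gap.
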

 
\begin{remark}
Since the condition \eqref{cond:flatextcond} is automatically satisfied if $(\tilde{\mathfrak{g}},\mathfrak{g})$ is a symmetric pair, \cref{thm:main-flat} implies \cref{cor:keystatement}.
\end{remark}

\begin{proof}
 Observe that by \cref{lem:CS-subgroup}, our assumptions imply that $[\CS(\mu_G)]\in
 H^3(G,\Z)$, so our construction of invariants from \cref{1.3} can be applied. For a
 flat extension $F\col P\to \tilde G$, consider $F^*\mu_{\tilde G}\in
 \Om^1(P,\tfg)$ which by definition satisfies
 $\theta=F^*(\mu_{\tilde{G}}^\top)=(F^*\mu_{\tilde G})^\top$.  Since
   $[F^*(\mu_{\tilde G}^\perp),F^*(\mu_{\tilde G}^\perp)]\in\Om^2(P,\frak{g})$, we
   can apply \cref{lem:CS-pairs} to $F^*\mu_{\tilde G}$ to conclude that
   $\CS(\th)=\CS(F^*\mu_{\tilde G})$ which in turn equals $F^*(\CS(\mu_{\tilde G}))$
   by \eqref{eqn:cspullback}. Hence
   $\si^*(\CS(\theta))=(F\o\si)^*(\CS(\mu_{\widetilde{G}}))$ and, as a pullback of an
   integral class, this represents a class in $H^3(M,\Z)$, so the result follows.
\end{proof}

Thus we see that in the cases where we obtain $\R/\Z$-valued invariants, existence of a flat extension implies vanishing of the invariant. In the case of $\R$-valued invariants, the consequence of flat extensions depends on the topology of $\widetilde{G}$. In case that $H^3(\widetilde{G},\R)=\{0\}$ (or more generally $[\mu_{\widetilde{G}}]=0$), we again get vanishing of the invariant. However, if $0\neq [\mu_{\widetilde{G}}]\in H^3(\widetilde{G},\Z)$, then we can only conclude that the invariant is an integer.

\subsection{Geometric construction of Riemannian flat extensions}\label{1.6}
The basic example of a flat extension comes from the original article of
Chern--Simons. Suppose that $(M,g)$ is an oriented Riemannian $3$-manifold, so $G=H=\mathrm{SO}(3)$. Then we choose $\tilde G=\mathrm{SO}(4)$, which contains $G$ as
a subgroup in the obvious way, i.e.\ as matrices of the form 
\[\begin{pmatrix} 1 & 0
  \\ 0 & A \end{pmatrix}.
  \]
One immediately verifies that for the Lie algebras this
implies that $(\tfg,\frak{g})$ is a symmetric pair.

Now $\tilde G$ acts transitively on the space of oriented $3$-planes in the inner
product space $\R^4$ and for this action, $G$ is the stabilizer of the oriented
subspace $\R^3\subset\R^4$. Hence we can identify the symmetric space
$\mathrm{SO}(4)/\mathrm{SO}(3)$ as the space of all oriented $3$-planes in
$\R^4$.  An oriented $3$-plane in $\R^4$ is of course uniquely
  determined by its positive unit normal, which leads to the more common
  identification $\mathrm{SO}(4)/\mathrm{SO}(3)\cong S^3$. 

 As the $\mathrm{SO}(4)$-invariant inner product on $\tfg$, we use
  $\tfrac{1}{16\pi^2}$ times the trace form. \cref{prop:normalization} in
  \cref{Appendix} below shows that this is a minimal choice for which we get
  $[\CS(\mu_{\mathrm{SO}(4)})]\in H^3(\mathrm{SO}(4),\Z)$. By \cref{lem:CS-subgroup},  
  the restriction of this form to $\mathfrak{so}(3)$ has the property that
  $[\CS(\mu_{\mathrm{SO}(3)})]\in H^3(\mathrm{SO}(3),\Z)$. More precisely, 
  \cref{prop:normalization} actually shows that
  $\int_{\mathrm{SO}(3)}\CS(\mu_{\mathrm{SO}(3)})=1$, so this is an optimal choice of
  normalization for obtaining an invariant with values in $\R/\Z$.

\begin{prop}\label{prop:Riemann-flat-ext}
Suppose that $(M,g)$ is a closed oriented Riemannian $3$-manifold and let
$\theta\in\Om^1(\Cal{SO}M,\frak{g})$ be the Levi-Civita connection form of $g$ on the
oriented orthonormal frame bundle of $M$. Then an isometric immersion $i\col M\to
\R^4$ gives rise to a flat extension of $\theta$ of type
$(\mathrm{SO}(4),\mathrm{SO}(3))$ and hence the Chern--Simons invariant of $(M,g)$
vanishes.
\end{prop}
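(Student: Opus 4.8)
The plan is to produce the flat extension geometrically from the Gauss map of the immersion, to identify $F^*(\mu_{\mathrm{SO}(4)}^\top)$ with the Levi--Civita connection via the fundamental theorem of submanifold geometry, and then to quote \cref{thm:main-flat}. First I would build the map. Given the isometric immersion $i\col M\to\R^4$, at each $p\in M$ the differential $\d i_p$ is injective, so $\d i_p(T_pM)$ is a hyperplane in $\R^4$; since $M$ and $\R^4$ are oriented I pick the positive unit normal $n(p)$, i.e.\ the unit vector for which $(n(p),\d i_p(e_1),\d i_p(e_2),\d i_p(e_3))$ is a positive basis of $\R^4$ whenever $(e_1,e_2,e_3)$ is a positive orthonormal frame of $T_pM$. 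This defines a global smooth Gauss map $n\col M\to S^3$. I then set $F\col\Cal{SO}M\to\mathrm{SO}(4)$ to send $u=(e_1,e_2,e_3)$ over $p$ to the matrix with columns $(n(p),\d i_p(e_1),\d i_p(e_2),\d i_p(e_3))$, which is orthonormal and positively oriented, hence in $\mathrm{SO}(4)$. With the convention $j\col\mathrm{SO}(3)\into\mathrm{SO}(4)$, $j(A)=\left(\begin{smallmatrix}1&0\\0&A\end{smallmatrix}\right)$, from \cref{1.6}, the normal sits in the stabilized first slot and the tangent frame in the last three, so $F(u\cdot A)=F(u)\cdot j(A)$ for $A\in\mathrm{SO}(3)$. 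Thus $F$ is a bundle homomorphism covering $n\col M\to\mathrm{SO}(4)/\mathrm{SO}(3)\cong S^3$.

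The heart of the matter will be the identity $\theta=F^*(\mu_{\mathrm{SO}(4)}^\top)$. I would first note that $F^*\mu_{\mathrm{SO}(4)}^\top$ is itself a principal $\mathrm{SO}(3)$-connection on $\Cal{SO}M$: it reproduces fundamental vector fields because $\iota_u^*F^*\mu_{\mathrm{SO}(4)}=\mu_{\mathrm{SO}(3)}$ by left-invariance of the Maurer--Cartan form, and it has the correct equivariance because $\Ad(j(A))$ preserves both $\frak{so}(3)$ and $\frak{g}^\perp$ and hence commutes with the projection $(\cdot)^\top$. To pin down \emph{which} connection it is, I would extend $F$ to the bundle map $\hat F\col\Cal{SO}M\to\Cal{SO}\R^4=\R^4\x\mathrm{SO}(4)$, $u\mapsto(i(p),F(u))$, into the oriented orthonormal frame bundle of $\R^4$. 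On $\Cal{SO}\R^4$ live the tautological $\R^4$-valued form $\om$ and the flat Levi--Civita connection form $\Phi$; in the flat trivialization $\Phi=\mathrm{pr}_2^*\mu_{\mathrm{SO}(4)}$, so $\hat F^*\Phi=F^*\mu_{\mathrm{SO}(4)}$, while $\hat F^*\om$ has vanishing normal component (as $\d i$ is tangential) and tangential component equal to the solder form $\vartheta$ of $\Cal{SO}M$ (as $i$ is isometric). Pulling back the first structure equation $\d\om+\Phi\wedge\om=0$ of $\R^4$ and reading off its tangential component gives $\d\vartheta+(F^*\mu_{\mathrm{SO}(4)}^\top)\wedge\vartheta=0$, i.e.\ $F^*\mu_{\mathrm{SO}(4)}^\top$ is torsion-free. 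Being an $\frak{so}(3)$-valued connection on the orthonormal frame bundle it is automatically metric, so by uniqueness of the Levi--Civita connection $F^*\mu_{\mathrm{SO}(4)}^\top=\theta$. Equivalently, this is precisely the classical Gauss formula: the tangential part of the ambient flat connection is the induced Levi--Civita connection. Hence $F$ is a flat extension of $\theta$ of type $(\mathrm{SO}(4),\mathrm{SO}(3))$.

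With the flat extension established, the vanishing is immediate. As recorded in \cref{1.6}, $(\frak{so}(4),\frak{so}(3))$ is a symmetric pair, so condition \eqref{cond:flatextcond} holds automatically for $F$. Taking $\tfrac{1}{16\pi^2}$ times the trace form on $\frak{so}(4)$ as in \cref{1.6}, the normalization computed in \cref{Appendix} gives $[\CS(\mu_{\mathrm{SO}(4)})]\in H^3(\mathrm{SO}(4),\Z)$. Therefore \cref{thm:main-flat} applies and yields $\int_M\si^*\CS(\theta)\in\Z$ for every global section $\si$. Since $H^3(\mathrm{SO}(4),\R)\neq 0$ the form $\CS(\mu_{\mathrm{SO}(4)})$ need not be exact, so one obtains integrality rather than literal vanishing of the real number; but the Chern--Simons invariant of $(M,g)$ is exactly the class $c_\si+\Z\in\R/\Z$ of \cref{prop:pullbacks}(ii) with this normalization, and integrality of $c_\si$ says precisely that this $\R/\Z$-valued invariant vanishes.

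I expect the single real obstacle to be the clean identification $\theta=F^*(\mu_{\mathrm{SO}(4)}^\top)$: the construction of $F$ and the closing application of \cref{thm:main-flat} are routine, whereas making torsion-freeness of the tangential connection transparent requires either the structure-equation bookkeeping on $\Cal{SO}\R^4$ sketched above or a careful appeal to the Gauss formula, together with checking that $F^*\mu_{\mathrm{SO}(4)}^\top$ genuinely satisfies the two axioms of a principal connection.
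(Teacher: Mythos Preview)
Your proposal is correct and follows the same overall strategy as the paper: construct the lift $F\col\Cal{SO}M\to\mathrm{SO}(4)$ of the Gauss map, verify it is a principal bundle morphism, identify $F^*(\mu_{\mathrm{SO}(4)}^\top)$ with the Levi--Civita connection, and then invoke \cref{thm:main-flat} with the normalization from \cref{Appendix}. The only noteworthy difference is in the identification step: the paper argues directly that the covariant derivative induced by $F^*(\mu_{\mathrm{SO}(4)}^\top)$ is the tangential projection of the flat derivative on $\R^4$ (by unpacking the equivariant-function description of vector fields), whereas you pull back the first structure equation of $\Cal{SO}\R^4$ to obtain torsion-freeness and then appeal to uniqueness of the Levi--Civita connection. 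Both are standard renderings of the Gauss formula and yield the same conclusion; your version has the small advantage of making the two connection axioms for $F^*(\mu_{\mathrm{SO}(4)}^\top)$ explicit, while the paper's is marginally shorter.
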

\begin{proof}
Let $f$ be the Gauss map of $i$, which sends a point $x\in M$ to the $3$-plane
$T_xi(T_xM)$ with the orientation induced from the one of $T_xM$  (or
  alternatively to the positive unit normal), viewed as a smooth map $M\to
\mathrm{SO}(4)/\mathrm{SO}(3)$. Since we start from an isometric immersion the map
$T_xi$ is orthogonal for $g_x$ and the restriction of the standard inner product to
$T_xi(T_xM)$. A point in $\Cal{SO}M$ over $x\in M$ can be viewed as an orientation
preserving linear isomorphism $u\col \R^3\to T_xM$ which is orthogonal with respect
to the standard inner product on $\R^3$ and $g_x$. Hence $T_xi\o u$ is an orthogonal
isomorphism from $\R^3$ to $T_xi(T_xM)$ and using the positively oriented unit normal
we can extend this to an orientation preserving orthogonal isomorphism from $\R^4$ to
itself, i.e.\ to an element of $\mathrm{SO}(4)$. This defines a smooth map $F\col
\Cal{SO}M\to \mathrm{SO}(4)$ which lifts the Gauss map $f$ and by construction is
$\mathrm{SO}(3)$-equivariant and hence a homomorphism of principal bundles. Hence
$F^*\mu_{\mathrm{SO}(4)}^\top\in\Om^1(\Cal{SO}M,\frak{g})$ is a principal connection
form on $\Cal{SO}M$, so we only have to prove that it coincides with the Levi-Civita
connection.

The orthonormal frame bundle of $\R^4$ is the trivial bundle $\R^4\x \mathrm{SO}(4)$
and viewed as a $1$-form on this bundle $\mu_{\mathrm{SO}(4)}$ is the Levi-Civita
connection form for the flat metric on $\R^4$. Now consider a vector field
$\tilde\xi$ on $\R^4$ and the corresponding $\mathrm{SO}(4)$-equivariant function
$\Bbb R^4\x \mathrm{SO}(4)\to\R^4$. Splitting this into a $\R^3$-component and an
$\Bbb R$-component and composing with $F$, we obtain maps $\Cal{SO}M\to\R^3$ and
$\Cal{SO}M\to\R$, which are $\mathrm{SO}(3)$-equivariant and
$\mathrm{SO}(3)$-invariant, respectively. So the first component defines a vector
field $\xi$ on $M$ while the second descends to a smooth function $a\col
M\to\R$. By construction, these have the property that
$\tilde\xi(i(x))=T_xi(\xi(x))+a(x)\frak{n}(x)$, where $\frak{n}(x)$ is the positively
oriented unit normal to $T_xi(T_xM)$. But this implies that the covariant derivative
induced by $F^*\mu_{\mathrm{SO}(4)}^\top$ coincides with the tangential part of the
covariant derivative in $\R^4$ and since $i$ is isometric, this coincides with the
Levi-Civita connection of $M$.
\end{proof}

\begin{remark}\label{rem:Rieman}
Consider a flat extension $F\col\Cal{SO}M\to \mathrm{SO}(4)$ as in
\cref{prop:Riemann-flat-ext}. Since $F$ maps the fundamental vector field generated
by $X\in\frak{g}=\mathfrak{so}(3)$ to left-invariant vector field $L_X$ on
$\mathrm{SO}(4)$, it follows that $F^*(\mu_{\mathrm{SO}(4)}^\perp)$ is not only
$\mathrm{SO}(3)$-equivariant but also horizontal. Since
$\frak{g}^\perp\cong\R^3\cong\R^{3*}$ as a representation of $\mathrm{SO}(3)$, we can
identify $F^*(\mu_{\mathrm{SO}(4)}^\perp)$ as a $1$-form on $M$ with values in
$TM\cong T^*M$. From the description in the proof of \cref{prop:Riemann-flat-ext},
one readily concludes that in the first interpretation, this produces the classical
\textit{shape operator}, which maps $\xi\in\frak{X}(M)$ to the tangential component
of the derivative of the unit normal in direction $\xi$. In the second
interpretation, one obtains the \textit{second fundamental form}, which maps
$\xi,\eta\in\frak{X}(M)$ to the normal component of the derivative of $\eta$ in
direction $\xi$. The fact that the shape operator and the second fundamental form are
essentially the same object in this case is a consequence of the unit normal being
orthogonal to the tangent spaces.
\end{remark}

\subsection{Lorentzian flat extensions}\label{1.7}
The line of argument of \cref{1.6} carries over directly to the Lorentzian case, but
things become more interesting here. On the one hand, so far no normalization of the
invariant bilinear form was needed in this case, but this becomes important now. On
the other hand, we have now two possibilities for isometric immersions, namely either
into $\mathbb R^{3,1}$ (with positive normal) or into $\mathbb R^{2,2}$ (with
negative normal). As we shall see, these have different consequences for the
invariant. The corresponding choices for $\tilde G$ are $\mathrm{SO}_0(3,1)$ and
$\mathrm{SO}_0(2,2)$ respectively, with the obvious inclusion of
$G=\mathrm{SO}_0(2,1)$ in both cases. In both cases, one obtains a symmetric pair
$(\tfg,\frak{g})$. The homogeneous space $\mathrm{SO}_0(3,1)/\mathrm{SO}_0(2,1)$ can
be interpreted as the space of all space- and time-oriented Lorentzian linear
subspaces $V\subset\R^{3,1}$. For any such subspace there is a unique oriented
(positive) unit normal. Similarly, $\mathrm{SO}_0(2,2)/\mathrm{SO}_0(2,1)$ is the
space of all space- and time-oriented Lorentzian linear subspaces $V\subset\R^{2,2}$
and such a space determines a unique (negative) oriented unit normal.

The difference between the two cases comes from the structure of $H^3(\tilde G,\Bbb
R)$. For $\mathrm{SO}_0(3,1)$, the maximal compact subgroup is isomorphic to
$\mathrm{SO}(3)$, so the third cohomology is non-trivial here. For
$\mathrm{SO}_0(2,2)$, the maximal compact subgroup is two dimensional and hence the
third cohomology vanishes identically. Thus the appropriate normalization for our
purposes is to start with an invariant non-degenerate symmetric bilinear form on
$\mathfrak{so}(3,1)$ normalized in such a way that
$[\CS(\mu_{\mathrm{SO}_0(3,1)})]\in H^3(\mathrm{SO}_0(3,1),\Z)$ for the resulting
normalization.  As we have seen in \cref{1.6} above,
  $\tfrac{1}{16\pi^2}$ times the trace form leads to an optimal normalization for
  $\mathrm{SO}(3)$. By the Iwasawa decomposition, $\mathrm{SO}(3)$ is a deformation retract
  of $\mathrm{SO}_0(3,1)$, so we can use the same normalization here. Now the proof
of \cref{prop:Riemann-flat-ext} generalizes in a straightforward way to show

\begin{prop}\label{prop:Lorentz-flat-ext}
Suppose that $(M,g)$ is a closed space- and time-oriented Lorentzian $3$-ma\-ni\-fold which admits a global orthonormal frame and let $\theta\in\Om^1(\Cal{SO}_0M,\frak{g})$ be the Levi-Civita connection form of $g$ on the space- and time-oriented orthonormal frame bundle of $M$.

(1) An isometric immersion $i\col M\to \R^{2,2}$ gives rise to a flat extension of
$\theta$ of type $(\mathrm{SO}_0(2,2),\mathrm{SO}_0(2,1))$ and hence the ($\R$-valued) Chern--Simons invariant of
$(M,g)$ vanishes.

(2) An isometric immersion $i\col M\to \R^{3,1}$ gives rise to a flat extension of
$\theta$ of type $(\mathrm{SO}_0(3,1),\mathrm{SO}_0(2,1))$ and hence the ($\R$-valued) Chern--Simons
invariant of $(M,g)$ has to be an integer.
\end{prop}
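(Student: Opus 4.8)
The plan is to mimic the proof of \cref{prop:Riemann-flat-ext} essentially verbatim, replacing the Riemannian Gauss map construction with its pseudo-Riemannian analogue, and then invoking \cref{thm:main-flat} with the appropriate target group $\tilde G$ in each case. The key observations that make the argument go through are already assembled in the text preceding the statement: $(\tfg,\frak g)$ is a symmetric pair for both $\tilde G=\mathrm{SO}_0(2,2)$ and $\tilde G=\mathrm{SO}_0(3,1)$, so the condition \eqref{cond:flatextcond} holds automatically; and the normalization of $\B{\,\cdot\,}{\cdot\,}$ has been fixed (as $\tfrac{1}{16\pi^2}$ times the trace form, transported along the deformation retraction of the maximal compact $\mathrm{SO}(3)$) so that $[\CS(\mu_{\tilde G})]\in H^3(\tilde G,\Z)$. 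The only genuinely geometric input needed is the construction of the flat extension itself.

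First I would construct, for each isometric immersion $i\col M\to\R^{2,2}$ (resp.\ $\R^{3,1}$), a bundle homomorphism $F\col\Cal{SO}_0M\to\tilde G$ covering the Gauss map. Exactly as in \cref{prop:Riemann-flat-ext}, a point of $\Cal{SO}_0M$ over $x$ is a space- and time-orientation preserving linear isometry $u\col\R^{2,1}\to T_xM$; composing with $T_xi$ gives an isometry onto the Lorentzian $3$-plane $T_xi(T_xM)\subset\R^{2,2}$ (resp.\ $\R^{3,1}$), and adjoining the distinguished unit normal (negative in the $\R^{2,2}$ case, positive in the $\R^{3,1}$ case, as dictated by the homogeneous-space identifications recalled above) extends this to an element of $\tilde G$. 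This $F$ is $G$-equivariant by construction, hence a principal bundle homomorphism. The argument that $F^*(\mu_{\tilde G}^\top)$ equals the Levi-Civita connection $\theta$ is identical to the Riemannian case: the Maurer--Cartan form on the trivial orthonormal frame bundle of flat $\R^{2,2}$ (resp.\ $\R^{3,1}$) is the flat Levi-Civita connection, and the tangential projection of the ambient covariant derivative along an isometric immersion is the intrinsic Levi-Civita connection. This shows $F$ is a flat extension of $\theta$.

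With the flat extension in hand, \cref{thm:main-flat} applies directly. In case (1), the maximal compact subgroup of $\mathrm{SO}_0(2,2)$ is two-dimensional, so $H^3(\mathrm{SO}_0(2,2),\R)=\{0\}$; thus $[\CS(\mu_{\mathrm{SO}_0(2,2)})]=0$, the form $\CS(\mu_{\tilde G})$ is exact, and \cref{thm:main-flat} yields $\int_M\si^*\CS(\theta)=0$. In case (2), $H^3(\mathrm{SO}_0(3,1),\R)\cong\R$ is nontrivial and $[\CS(\mu_{\mathrm{SO}_0(3,1)})]\in H^3(\mathrm{SO}_0(3,1),\Z)$ by our normalization, so \cref{thm:main-flat} only gives integrality, $\int_M\si^*\CS(\theta)\in\Z$.

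The main obstacle, such as it is, lies in verifying the sign and orientation conventions for the unit normal that make $F$ land in the correct component $\mathrm{SO}_0$ of the full pseudo-orthogonal group, and in checking that the normal really has the claimed causal character (spacelike giving $\R^{2,2}$, timelike giving $\R^{3,1}$) — this is precisely the point at which the two cases diverge and where one must be careful that the immersed $3$-plane is genuinely Lorentzian so that the induced metric on $M$ is nondegenerate of the right signature. Once the homogeneous-space identifications $\mathrm{SO}_0(2,2)/\mathrm{SO}_0(2,1)$ and $\mathrm{SO}_0(3,1)/\mathrm{SO}_0(2,1)$ recalled above are taken as given, the extension step is routine and the cohomological conclusions follow immediately from \cref{thm:main-flat}.
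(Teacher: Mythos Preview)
Your proposal is correct and matches the paper's approach exactly: the paper simply states that ``the proof of \cref{prop:Riemann-flat-ext} generalizes in a straightforward way'' without writing out the details, and you have filled these in precisely as intended (Gauss map lift to $\tilde G$, identification of $F^*(\mu_{\tilde G}^\top)$ with the Levi--Civita connection via the tangential projection, then \cref{thm:main-flat} combined with the cohomological facts about $\tilde G$). One small slip: in your final paragraph the parenthetical ``spacelike giving $\R^{2,2}$, timelike giving $\R^{3,1}$'' has the causal characters of the normal swapped relative to your own (correct) earlier assignment ``negative in the $\R^{2,2}$ case, positive in the $\R^{3,1}$ case''.
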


The interpretation of $F^*(\mu_{\tilde G}^\perp)$ for a flat extension $F$ is
completely parallel to the Riemannian case discussed in \cref{rem:Rieman}. 

\subsection{Equiaffine immersions and flat extensions}\label{1.8}
We next discuss the case of volume preserving affine connections on $3$-manifolds,
so $G=H=\mathrm{SL}(3,\R)$. Similarly to the pseudo-Riemannian case, we consider
$\widetilde{G}\deq\mathrm{SL}(4,\R)$ and the obvious inclusion of
$\mathrm{SL}(3,\R)$ here and the flat extension will be obtained from a Gauss
map. The details will be quite a bit more involved, though, in particular $\tilde
G/G$ has dimension $7$ and is not a symmetric space. Indeed, on the level of Lie
algebras, the inclusion $\frak g\deq\frak{sl}(3,\R)\hookrightarrow
\frak{sl}(4,\R)=:\tfg$ corresponds to a block decomposition of elements of $\tfg$ of
the form
\[
 \begin{pmatrix} a & Z \\ X & A-\tfrac{a}{3}\Bbb
    I\end{pmatrix}
\] 
with $A\in\frak{g}$, $a\in\R$, $X\in\R^3$ and
$Z\in\R^{3*}$. Hence we conclude that $\frak{g}^\perp=\Bbb R\oplus\R^3\oplus\R^{3*}$
as a representation of $\frak{g}$, and the component
$\frak{g}^\perp\x\frak{g}^\perp\to\frak{g}^\perp$ of the Lie bracket of $\tfg$ is (in
obvious notation) given by
\begin{multline}\label{eq:non-symm}
[(a_1,X_1,Z_1),(a_2,X_2,Z_2)]\\
=(Z_1X_2-Z_2X_1,\frac43(-a_1X_2+a_2X_1),\frac43(a_1Z_2-a_2Z_1)). 
\end{multline}

Let us first give an appropriate interpretation of the homogeneous space $\tilde
G/G$. Consider the set of all pairs $(V,\ell)$, where $V\subset\R^4$ is an
oriented linear subspace of dimension $3$ endowed with a volume element and
$\ell\subset\R^4$ is a $1$-dimensional linear subspace complementary to $V$.
Then $\mathrm{SL}(4,\R)$ acts on this space by $A\cdot (V,\ell)\deq(A(V),A(\ell))$. Starting
from the natural base point given by $V=\R^3$ with the orientation and volume
element induced by the standard basis $\{e_1,e_2,e_3\}$ and $\ell=\R\cdot e_4$,
linear algebra shows that the action of $\mathrm{SL}(4,\R)$ is transitive and that the
stabilizer of the base point is $\mathrm{SL}(3,\R)$. Hence we may identify $\tilde G/G$
with the space of all such pairs.

Now let us assume that we have given an oriented $3$-manifold $M$. Suppose that
$i\col M\to\R^4$ is an immersion and that in addition we choose a transversal,
i.e.~for each $x\in M$, we choose a line $\ell(x)\subset\Bbb R^4$, which is
complementary to the $3$-dimensional subspace $T_xi(T_xM)\subset\Bbb
R^4=T_{i(x)}\R^4$. We assume this choice to be smooth in the obvious sense,
i.e.~locally around each point the lines can be spanned by a smooth vector field
along $i$. This shows that $i$ and $\ell$ determine a smooth Gauss map $f\col
M\to\tilde G/G$, which sends each $x\in M$ to the pair $(T_xi(T_xM),\ell(x))$.

Via $i$ and $\ell$, any tangent vector at a point $i(x)$ decomposes uniquely into a
component in $\ell(x)$ and a component in $T_xi(T_xM)$. Via this decomposition, the
flat connection on $\R^4$ induces a linear connection $\nabla^\top$ on $TM$ as
well as a connection $\nabla^\perp$ on the trivial line bundle $M\x\R$. It is
easy to understand when $\nabla^\top$ preserves a volume form.

\begin{lemma}\label{lemma:volume}
Given an immersion $i\col M\to\R^4$ and a choice $\ell$ of transversal, the induced
connection $\nabla^\top$ preserves a volume form if and only if there is a global
non-zero section of $M\x\R$ which is parallel for $\nabla^\perp$.
\end{lemma}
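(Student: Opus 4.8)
The plan is to reduce the assertion to a statement about determinant line bundles. Write $D$ for the flat connection on $\R^4$, regarded as a connection on the trivial bundle $M\times\R^4$ pulled back along $i$. The immersion and the transversal give the splitting $M\times\R^4=i_*TM\oplus\ell$ into the image of the tangent bundle and the transversal line bundle $\ell$, and by construction $\nabla^\top$ and $\nabla^\perp$ arise by projecting $D$ onto these two summands; in particular $\nabla^\perp$ is the connection on $\ell\cong M\times\R$. First I would record that ``$\nabla^\top$ preserves a volume form'' is equivalent to the connection induced by $\nabla^\top$ on the determinant bundle $\Lambda^3T^*M$ admitting a nowhere-vanishing parallel global section, and, passing to the dual line bundle, to the connection induced on $\Lambda^3TM$ admitting such a section. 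This is the standard reformulation of a $\nabla^\top$-parallel volume form as a parallel section of the relevant real line bundle with connection.

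The key step is the following comparison. The constant volume form $\omega_0$ on $\R^4$ is a nowhere-zero $D$-parallel section of $\Lambda^4(M\times\R^4)$. Under the canonical isomorphism
\[
\Lambda^4(M\times\R^4)\cong\Lambda^3(i_*TM)\otimes\ell\cong\Lambda^3TM\otimes\ell,
\]
I claim the connection induced by $D$ on the left corresponds to the tensor product of the $\nabla^\top$-induced connection on $\Lambda^3TM$ and of $\nabla^\perp$ on $\ell$. To check this, differentiate a decomposable section $e_1\wedge e_2\wedge e_3\wedge s$, where $e_1,e_2,e_3$ is a local frame of $i_*TM$ and $s$ a local section of $\ell$, and split $D_Xe_j=\nabla^\top_Xe_j+\phi_j\,s$ and $D_Xs=\psi+\nabla^\perp_Xs$ according to the decomposition. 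Every resulting term either contains the factor $s$ twice (the $\phi_j\,s$ contributions) or contains four tangential factors lying in the $3$-dimensional space $\mathrm{span}(e_1,e_2,e_3)$ (the $\psi$ contribution), so all these cross terms wedge to zero; what remains is exactly the Leibniz expression for the tensor-product connection. Hence $\omega_0$ is a nowhere-zero parallel section of $\Lambda^3TM\otimes\ell$ for that tensor connection.

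The equivalence then follows formally. If $\mu$ is a nowhere-zero section of $\Lambda^3TM$ and $s$ the nowhere-zero section of $\ell$ determined by $\mu\otimes s=\omega_0$, then, because $\omega_0$ is parallel and the connection is the tensor product, $\mu$ is $\nabla^\top$-parallel if and only if $s$ is $\nabla^\perp$-parallel. Thus a $\nabla^\top$-parallel volume form on $M$ (equivalently a nowhere-zero $\nabla^\top$-parallel section of $\Lambda^3TM$) exists if and only if $\ell\cong M\times\R$ admits a nowhere-zero $\nabla^\perp$-parallel section, which is the claim. Orientability of $M$ makes $\Lambda^3TM$, and hence $\ell$, trivial, which is also what justifies identifying the transversal bundle with $M\times\R$.

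The only genuine computation is the connection comparison in the second paragraph, and the point to get right is that all cross terms in $D_X(e_1\wedge e_2\wedge e_3\wedge s)$ vanish for the stated reasons; once this is in hand, everything else is formal manipulation of line bundles with connection together with the standard reformulation of volume-preservation.
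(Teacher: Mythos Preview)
Your proof is correct and follows essentially the same route as the paper's: both use the isomorphism $\Lambda^4(M\times\R^4)\cong\Lambda^3TM\otimes\ell$ induced by the splitting, observe that the $D$-parallel standard volume form trivializes the left-hand side, and conclude that the induced connections on $\Lambda^3T^*M$ and on $\ell$ correspond, so that parallel sections match. The only difference is one of detail: you spell out the Leibniz computation showing that the cross terms in $D_X(e_1\wedge e_2\wedge e_3\wedge s)$ vanish, whereas the paper simply asserts that the trivialization of $\Lambda^4\R^4$ is ``compatible with the flat connection'' and hence the isomorphism $\Lambda^3T^*M\cong L$ intertwines the connections.
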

\begin{proof}
The standard volume form on $\R^4$ defines an isomorphism $\Lambda^4\Bbb
R^4\to\R$. The decomposition $\R^4=T_xi(T_xM)\oplus\ell(x)$ induces an
isomorphism $\Lambda^4\R^4\cong \Lambda^3(T_xi(T_xM))\otimes\ell(x)$ for each
$x\in M$. Over $M$, this gives rise to a trivialization of the bundle $\La^3TM\otimes
L$, where $L=M\x\R$ and hence to an isomorphism $\La^3T^*M\cong L$. Since the
trivialization of $\La^4\R^4$ is compatible with the flat connection on $\Bbb
R^4$ this isomorphism pulls back the connection $\nabla^\perp$ on $L$ to the
connection on $\La^3T^*M$ induced by $\nabla^\top$. By definition, $\nabla^\top$
preserves a volume form if and only if the latter connection admits a non-zero global
parallel section.
\end{proof}

Now we introduce the classical concept of an equiaffine immersion, as in
\cite[Defintion 1.4]{MR1311248}.
\begin{definition}\label{def:equiaffine}
Let $M$ be a smooth manifold of dimension $3$ endowed with a fixed
  volume form $\nu\in\Om^3(M)$ and a torsion-free linear connection $\nabla$ on $TM$
which preserves $\nu$. An \textit{equiaffine} immersion of $M$ into
$\R^4$ is given by a smooth immersion $i\col M\to\R^4$ and a choice $\ell\col
M\to\R P^3$ of transversal such that the induced connection
$\nabla^\top$ on $TM$ coincides with $\nabla$.
\end{definition}

 As the invariant bilinear form on $\tfg=\mathfrak{sl}(4,\R)$, we again
  use $\tfrac{1}{16\pi^2}$ times the trace form. In view of \cref{prop:sl-so} the
  considerations in \cref{1.6} show that this leads to
  $[\CS(\mu_{\mathrm{SL}(4,\R)})]\in H^3(\mathrm{SL}(4,\R),\Z)$ as well as
  $\int_{\mathrm{SO}(3)}\CS(\mu_{\mathrm{SL}(3,\R)})=1$.

\begin{thm}\label{thm:equiaffine}
Let $M$ be a closed oriented $3$-manifold endowed with a  volume form
  $\nu\in\Om^3(M)$ and a torsion-free linear connection $\nabla$ on $TM$ which
preserves $\nu$. Let $\Cal{SL}M$ be the volume preserving frame bundle of $M$ with
respect to $\nu$ and let $\theta\in\Om^1(\Cal{SL}M,\mathfrak{sl}(3,\R))$ be the
connection form of $\nabla$.

Then an equiaffine immersion of $M$ into $\R^4$ defines a flat extension of $\theta$
of type $(\mathrm{SL}(4,\R),\mathrm{SL}(3,\R))$ that satisfies
\eqref{cond:flatextcond} and hence implies vanishing of the Chern--Simons invariant
associated to $\nabla$.
\end{thm}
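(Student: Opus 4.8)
The plan is to build the flat extension directly from the equiaffine immersion, in close analogy with the Riemannian construction of \cref{prop:Riemann-flat-ext}, and then to establish the bracket condition \eqref{cond:flatextcond} by hand, since $(\tfg,\frak{g})$ is not a symmetric pair. First I would fix a transversal vector field $\xi$ spanning $\ell$. Since $\nabla^\top=\nabla$ preserves $\nu$, \cref{lemma:volume} provides a global nowhere-zero section of $M\x\R$ that is parallel for $\nabla^\perp$, and $\xi$ is the associated transversal vector field; unwinding the isomorphism $\La^3T^*M\cong M\x\R$ of that lemma, $\xi$ is equivalently characterized as the unique normalization for which the determinant of $(T_xi(u(e_1)),T_xi(u(e_2)),T_xi(u(e_3)),\xi(x))$, with respect to the standard volume form of $\R^4$, equals $1$ whenever $u\col\R^3\to T_xM$ is a $\nu$-compatible frame.

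For $u\in\Cal{SL}M$ over $x$ I then define $F(u)\in\mathrm{SL}(4,\R)$ to be the linear map whose columns are these four vectors; the normalization gives $\det F(u)=1$, and a direct check yields $\mathrm{SL}(3,\R)$-equivariance, so $F$ covers the Gauss map $x\mapsto(T_xi(T_xM),\ell(x))$ and is a homomorphism of principal bundles. By the same vector-field argument as in the proof of \cref{prop:Riemann-flat-ext}, viewing $\mu_{\mathrm{SL}(4,\R)}$ as the flat connection form on the trivial frame bundle of $\R^4$ and decomposing along $\R^4=T_xi(T_xM)\oplus\ell(x)$ identifies the tangential block of $F^*\mu_{\tilde G}$ with the connection form of the induced connection $\nabla^\top$. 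Because $\xi$ is $\nabla^\perp$-parallel, the scalar component $a$ of $F^*\mu_{\tilde G}^\perp$ --- the transversal connection form --- vanishes, so this block is trace-free, coincides with $F^*\mu_{\tilde G}^\top$, and, by the equiaffine hypothesis $\nabla^\top=\nabla$, equals $\theta$. Thus $F$ is a flat extension of $\theta$ of type $(\mathrm{SL}(4,\R),\mathrm{SL}(3,\R))$.

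The main point is \eqref{cond:flatextcond}. Writing $F^*\mu_{\tilde G}^\perp=(a,X,Z)$ according to $\frak{g}^\perp=\R\oplus\R^3\oplus\R^{3*}$, the previous paragraph gives $a=0$. Inspecting the bracket formula \eqref{eq:non-symm}, the $\R^3$- and $\R^{3*}$-components of $[F^*\mu_{\tilde G}^\perp,F^*\mu_{\tilde G}^\perp]$ are each proportional to $a$ and hence vanish, leaving only the scalar ($\R$) component, which is a nonzero multiple of the $2$-form obtained by contracting $X\in\Om^1(\Cal{SL}M,\R^3)$ with $Z\in\Om^1(\Cal{SL}M,\R^{3*})$. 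To see that this contraction vanishes, I would use that $F^*\mu_{\tilde G}$ is the pullback of a Maurer--Cartan form and is therefore flat, $\d(F^*\mu_{\tilde G})+\tfrac12[F^*\mu_{\tilde G},F^*\mu_{\tilde G}]=0$. Extracting the scalar component of this identity, and using both $\d a=0$ and that $[\frak{g},\frak{g}^\perp]$ contributes nothing to the scalar slot, forces the contraction of $X$ and $Z$ to be zero. Hence $[F^*\mu_{\tilde G}^\perp,F^*\mu_{\tilde G}^\perp]\in\Om^2(\Cal{SL}M,\frak{g})$, i.e.\ \eqref{cond:flatextcond} holds; geometrically this is the affine Ricci equation for a transversal field with vanishing transversal connection form.

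Finally, with the invariant bilinear form fixed as $\tfrac{1}{16\pi^2}$ times the trace form we have $[\CS(\mu_{\mathrm{SL}(4,\R)})]\in H^3(\mathrm{SL}(4,\R),\Z)$, as recorded before the statement. Since $F$ is a flat extension satisfying \eqref{cond:flatextcond}, \cref{thm:main-flat} gives $\int_M\si^*\CS(\theta)\in\Z$ for every global section $\si$, which is precisely the vanishing of the $\R/\Z$-valued Chern--Simons invariant of $\nabla$. The one genuinely nontrivial step is the verification of \eqref{cond:flatextcond}: unlike the symmetric-pair situations of \cref{prop:Riemann-flat-ext} and \cref{prop:Lorentz-flat-ext}, here one must combine the equiaffine normalization $a=0$ with the Maurer--Cartan equation in order to cancel the residual scalar term.
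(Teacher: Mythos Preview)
Your proof is correct and follows essentially the same route as the paper's: you construct $F$ by completing a $\nu$-unit frame with the determinant-normalized transversal vector, identify $F^*\mu_{\tilde G}^\top$ with $\theta$ via $\nabla^\top=\nabla$, read off $a=0$ from the $\nabla^\perp$-parallelism of the transversal, and then kill the residual scalar part of $[F^*\mu_{\tilde G}^\perp,F^*\mu_{\tilde G}^\perp]$ using the $\frak g^\perp$-component of the Maurer--Cartan equation exactly as the paper does. The only cosmetic discrepancy is that you place the transversal vector as the last column of $F(u)$ while the paper (and its block decomposition \eqref{eq:non-symm}) places it first; this does not affect the argument.
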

\begin{proof}
We follow the same route as in the Riemannian case with appropriate modifications. A
point $u\in \Cal{SL}M$ over $x\in M$ is a linear isomorphism $\R^3\to T_xM$ which
maps the standard basis $\{e_2,e_3,e_4\}$ to a positively oriented basis of unit
volume. Composing this with $T_xi\col T_xM\to\R^4$, there exists a unique vector
$0\neq \frak{t}(x)\in\ell(x)\subset\R^4$ which completes
$T_xi(u(e_2)),T_xi(u(e_3)),T_xi(u(e_4))$ to a positively oriented basis of $\R^4$ of
unit volume. Choosing $F(u)(e_1)=\frak{t}(x)$ extends $u$ to a linear isomorphism
$F(u)\col\R^4\to\R^4$ which is orientation-preserving and volume-preserving and hence
an element of $\tilde G=\mathrm{SL}(4,\R)$. Clearly, this defines a smooth map
$F\col\Cal{SL}M\to \tilde G$ which is $G$-equivariant, where
$G=\mathrm{SL}(3,\R)$. Thus it defines a morphism of principal fibre bundles to
$\tilde G\to \tilde G/G$.

As we have noted above, the flat connection on $\R^4$ induces linear connections
$\nabla^\top$ on $TM$ and $\nabla^\perp$ on $M\x\R$. Since we started with an
equiaffine immersion, $\nabla^\top=\nabla$ and since this is volume preserving, 
$\nabla^\perp$ is flat by \cref{lemma:volume}. Now as in the proof of
\cref{prop:Riemann-flat-ext}, the first property implies that
$\theta=F^*(\mu_{\widetilde G}^\top)\in\Om^1(\Cal{SL}M,\frak{g})$. The second property
implies that the $1$-form $F^*(\mu_{\widetilde G}^\perp)$ has vanishing $\Bbb
R$-component. From formula \eqref{eq:non-symm} we conclude that this implies that the
$\frak{g}^\perp$-component of $[F^*(\mu_{\widetilde G}^\perp),F^*(\mu_{\widetilde
G}^\perp)]$ has values in the $\R$-component of $\frak{g}^\perp$ only. But on
the other hand, we can take the $\frak{g}^\perp$-component of the Maurer--Cartan
equation for $\mu_{\widetilde G}$ and pull back by $F$ to conclude that the latter
component can be written as
\[
-2\d F^*(\mu_{\widetilde G}^\perp)-2[F^*(\mu_{\widetilde G}^\top),F^*(\mu_{\widetilde
G}^\perp)], 
\]
which by construction has vanishing $\R$-component. Thus we have found the
claimed flat extension and the vanishing of the Chern--Simons invariant follows from \cref{thm:main-flat} and the comparison of normalization conditions on
$\B{\,\cdot\,}{\cdot\,}$ above.
\end{proof}

\begin{remark}\label{rem:equi-aff}
(1) As before, the two non-vanishing components of $F^*(\mu_{\widetilde G}^\perp)$ are horizontal, $G$-equivariant $1$-forms with values in $\R^3$ respectively in $\Bbb R^{3*}$. From the description of the induced connection in the proof of \cref{thm:equiaffine} we see that these are again the shape operator in $\Om^1(M,TM)$ and the second fundamental form in $\Om^1(M,T^*M)$ of the equiaffine immersion. In contrast to the (pseudo-)Riemannian case, these two objects are independent now, their only relation is that wedge-contraction of the two forms (which is an element of $\Om^2(M)$) vanishes, which follows immediately from the volume compatibility of the immersion.

(2) If $(M,g)$ is an oriented Riemannian $3$-manifold and $i:M\to\Bbb
  R^4$ is an isometric immersion, then using the normal as a transversal makes $i$
  into an equiaffine immersion. Since both results use the same multiple of the
  trace form, this shows that \cref{prop:Riemann-flat-ext} actually is a special case of
  \cref{thm:equiaffine}.
\end{remark}

\appendix

\section{Normalization}\label{Appendix}

 We can apply the construction of the Riemannian Gauss map from
  \cref{1.6} (in general dimensions) to the inclusion $i:S^n\to\Bbb R^{n+1}$ of the
  standard sphere. Lifting this to the oriented orthonormal frame bundle
  $\Cal{SO}S^n$ one obtains an isomorphism $F:\Cal{SO}S^n\to \mathrm{SO}(n+1)$
  covering the diffeomorphism $S^n\to\mathrm{SO}(n+1)/\mathrm{SO}(n)$ discussed in
  \cref{1.6}. This is most easily interpreted as taking the map
  $p:\mathrm{SO}(n+1)\to S^n$ defined by mapping $A\in\mathrm{SO}(n+1)$ to $A(e_1)$,
  i.e.~the first column vector of $A$. This is the projection of the oriented
  orthonormal frame bundle, since the remaining columns of $A$ are a positively
  oriented orthonormal basis for $A(e_1)^\perp=T_{A(e_1)}S^n$. The considerations in
  \cref{1.6} also show that for the Maurer-Cartan form $\mu_{n+1}$ of
  $\mathrm{SO}(n+1)$, $\mu_{n+1}^\top$ is the connection form of the Levi-Civita
  connection of the round metric on $S^n$, while $\mu_{n+1}^\perp$ represents the
  second fundamental form.

\begin{prop}\label{prop:normalization}
Let $\mu_4$ denote the Maurer--Cartan form of $\mathrm{SO}(4)$. Then
\begin{equation}\label{eqn:goodnorm}
\zeta\deq\frac{1}{16\pi^2}\tr\left(\mu_4\wedge \d
\mu_4+\tfrac{2}{3}\mu_4\wedge\mu_4\wedge\mu_4\right)
\end{equation}
represents an element of $H^3(\mathrm{SO}(4),\Z)$.  Moreover, for the
  inclusion $\iota:\mathrm{SO}(3)\to \mathrm{SO}(4)$, we have
  $\int_{\mathrm{SO}(3)}\iota^*\zeta=1$.
\end{prop}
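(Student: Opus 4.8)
The plan is to prove Proposition~\ref{prop:normalization} by verifying the two assertions separately, using explicit homological information about $\mathrm{SO}(4)$ and $\mathrm{SO}(3)$.

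For the integrality of $[\zeta]$, I would first recall the structure of $H^3(\mathrm{SO}(4),\Z)$ and, dually, $H_3(\mathrm{SO}(4),\Z)$. The group $\mathrm{SO}(4)$ has a double cover $\mathrm{Spin}(4)\cong \mathrm{SU}(2)\x\mathrm{SU}(2)\cong S^3\x S^3$, and the relevant three-dimensional homology is generated by the classes of the two copies of $S^3$ together with their images under the covering projection. By the universal coefficient theorem and de Rham theory, $[\zeta]\in H^3(\mathrm{SO}(4),\Z)$ if and only if $\int_Z\zeta\in\Z$ for every integral $3$-cycle $Z$; since $\zeta$ is a de Rham class it suffices to check this on a set of generators of the free part of $H_3(\mathrm{SO}(4),\Z)$. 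The natural generators to use are the two standard $\mathrm{SU}(2)\cong\mathrm{Sp}(1)$ subgroups sitting inside $\mathrm{Spin}(4)$, pushed down to $\mathrm{SO}(4)$. The concrete task then is to compute $\int_{S^3}j^*\zeta$ for each such $3$-sphere, where $j$ is the inclusion. Because $\zeta$ is (a multiple of) the Cartan $3$-form and restricts on any $\mathrm{SU}(2)$ factor to a multiple of its own Cartan form, this reduces to evaluating the Cartan $3$-form over $\mathrm{SU}(2)\cong S^3$, i.e.\ essentially to computing the volume of $S^3$ against the trace-form normalization. The factor $\tfrac{1}{16\pi^2}$ is chosen precisely so that these periods come out to be integers (indeed $\pm1$ on the generators), which is what makes this the minimal such normalization.

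For the second claim, $\int_{\mathrm{SO}(3)}\iota^*\zeta=1$, I would exploit that $\iota^*\zeta=\tfrac{1}{16\pi^2}\tr(\mu_3\wedge\d\mu_3+\tfrac23\mu_3^{\wedge 3})$ where $\mu_3$ is the Maurer--Cartan form of $\mathrm{SO}(3)$, since the trace form restricts correctly under the block inclusion $\mathfrak{so}(3)\hookrightarrow\mathfrak{so}(4)$ (as already noted in the body of the paper). Thus $\iota^*\zeta=\CS(\mu_{\mathrm{SO}(3)})$ for the chosen form, and the integral becomes the period of the Cartan $3$-form of $\mathrm{SO}(3)$ over its fundamental class. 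I would compute this directly: pick an orthonormal basis $\{E_1,E_2,E_3\}$ of $\mathfrak{so}(3)$ with $[E_i,E_j]=\epsilon_{ijk}E_k$, write $\CS(\mu_3)$ as the left-invariant $3$-form induced by $(X,Y,Z)\mapsto -\tfrac16\B{X}{[Y,Z]}$, and integrate against the bi-invariant volume obtained from the same trace form, using that $\mathrm{SO}(3)\cong\R P^3$ has half the volume of $\mathrm{SU}(2)\cong S^3$ under the corresponding normalization. The constant $\tfrac{1}{16\pi^2}$ then yields exactly $1$.

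The main obstacle I anticipate is bookkeeping of the normalization constants—getting the precise relationship between the trace form on $\mathfrak{so}(n)$, the induced bi-invariant metric, the resulting volume of $\mathrm{SU}(2)$ versus $\mathrm{SO}(3)$ (a factor of $2$ from the covering $\mathrm{SU}(2)\to\mathrm{SO}(3)$), and the constant $16\pi^2$—so that both periods land on integers with the stated values. A secondary subtlety is confirming that the two $\mathrm{SU}(2)$-generators of $H_3(\mathrm{SO}(4),\Z)$ indeed span the free part and that $[\zeta]$ pairs integrally (and not merely rationally) against all of $H_3$, which requires knowing that there is no torsion obstruction interfering with integrality of the de Rham class; this is handled by checking the pairing on an explicit integral basis rather than only on $H_3\otimes\R$.
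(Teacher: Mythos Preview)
Your overall strategy---evaluate $\zeta$ on explicit generators of $H_3(\mathrm{SO}(4),\Z)$---is exactly the paper's. The difference, and the gap, is in your choice of generators. You propose the images in $\mathrm{SO}(4)$ of the two $\mathrm{SU}(2)$-factors of $\mathrm{Spin}(4)\cong\mathrm{SU}(2)\times\mathrm{SU}(2)$, but these do \emph{not} generate $H_3(\mathrm{SO}(4),\Z)\cong\Z^2$. The paper instead uses the trivialization $\mathrm{SO}(4)\cong S^3\times\mathrm{SO}(3)$ of the bundle $p:\mathrm{SO}(4)\to S^3$, so that (via K\"unneth) $[\sigma(S^3)]$ and $[\iota(\mathrm{SO}(3))]$ form an honest basis. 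To see why your cycles fail: the section $\sigma$ in the paper is left quaternion multiplication, i.e.\ precisely the image of $\mathrm{SU}(2)_L$, so $[\mathrm{SU}(2)_L]=(1,0)$ in that basis. The diagonal $\Delta\subset\mathrm{Spin}(4)$ double-covers the fibre $\mathrm{SO}(3)$, so $p_*[\Delta]=2[\mathrm{SO}(3)]=(0,2)$; since $[\Delta]=[\mathrm{SU}(2)_L]+[\mathrm{SU}(2)_R]$ in $H_3(\mathrm{Spin}(4))$, you get $[\mathrm{SU}(2)_R]=(-1,2)$. The sublattice spanned by $(1,0)$ and $(-1,2)$ has index $2$, so testing $\zeta$ only against your two cycles proves at best $2[\zeta]\in H^3(\mathrm{SO}(4),\Z)$.

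The irony is that your second computation, $\int_{\mathrm{SO}(3)}\iota^*\zeta=1$, is exactly the missing period; combined with $\int_{\sigma(S^3)}\zeta=1$ (your $\mathrm{SU}(2)_L$ integral) it \emph{does} establish integrality on a genuine basis. So the fix is structural: drop the $\mathrm{SU}(2)_R$ cycle, recognize that $[\sigma(S^3)]$ and $[\iota(\mathrm{SO}(3))]$ generate, and use the two integrals you are already computing. This is what the paper does, carrying out both integrals explicitly---the first via the identification $\mathrm{SO}(3)\cong\mathcal{SO}S^2$ (giving $2\pi\cdot 4\pi$ for the relevant volume), the second via an explicit matrix section and recognizing the resulting $3$-form as the standard volume form on $S^3$. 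Your alternative route to the $\mathrm{SO}(3)$ integral via $\mathrm{vol}(\mathrm{SU}(2))/2$ is fine, provided you track the isomorphism $\mathfrak{su}(2)\to\mathfrak{so}(3)$ against the trace forms carefully.
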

\begin{proof}
We will show that the $3$-form defined in \eqref{eqn:goodnorm} evaluated on a set of
generators of $H_3(\mathrm{SO}(4),\Z)$ is $1$.  The principal right
$\mathrm{SO}(3)$-bundle $\pi \col \mathrm{SO}(4) \to S^3$ is trivial so that
$\mathrm{SO}(4)$ is diffeomorphic to $S^3\times \mathrm{SO}(3)$. Standard results
about the homology of $S^3$ and $\mathrm{SO}(3)\simeq \mathbb{RP}^3$ together with
K\"unneth's theorem imply that
\[
H_3(\mathrm{SO}(4),\Z)\simeq H_3(S^3\times \mathrm{SO}(3),\Z)\simeq \Z \oplus \Z. 
\]
Moreover, $H_3(\mathrm{SO}(4),\Z)$ is generated by the pushforward of the fundamental
class of $\mathrm{SO}(3)$ under the inclusion $\iota \col \mathrm{SO}(3) \to
\mathrm{SO}(4)$ and the pushforward of the fundamental class of $S^3$ under a section
$\sigma \col S^3 \to \mathrm{SO}(4)$. It is thus sufficient to show that we have
\begin{equation}\label{eqn:integrals}
\int_{\mathrm{SO}(3)}\iota^*\zeta=1 \qquad \text{and}\qquad \int_{S^3}\sigma^*\zeta=1
\end{equation}
with respect to suitable orientations of $S^3$ and $\mathrm{SO}(3)$. We first treat
the left integral.  We know from the proof of \cref{lem:CS-subgroup}
  that $\iota^*\mu_4^\top=\mu_3$, the Maurer--Cartan form of
$\mathrm{SO}(3)$. Furthermore, $(\tilde{\mathfrak{g}},\mathfrak{g})$ is a symmetric
pair so that \cref{lem:CS-pairs} implies
\[
\int_{\mathrm{SO}(3)}\iota^*\zeta=\frac{1}{16\pi^2}\int_{\mathrm{SO}(3)}\tr\left(\mu_3\wedge\d\mu_3+\tfrac{2}{3}\mu_3\wedge\mu_3\wedge\mu_3\right).
\]
Writing
\[
\mu_3=\begin{pmatrix} 0 & -\omega_1 & -\omega_2 \\ \omega_1 & 0 & -\psi \\ \omega_2 & \psi & 0 \end{pmatrix}
\]
for left-invariant $1$-forms $\omega_1,\omega_2,\psi \in \Omega^1(\mathrm{SO}(3))$, an elementary computation gives
\[
\tr\left(\mu_3\wedge\d\mu_3+\tfrac{2}{3}\mu_3\wedge\mu_3\wedge\mu_3\right)=2\,\omega_1\wedge\omega_2\wedge\psi.
\]
Under the identification $\mathrm{SO}(3)\cong\Cal{SO}S^2$ observed above, the integral 
\[
\int_{\mathrm{SO}(3)}\omega_1\wedge\omega_2\wedge\psi
\] 
equals the product of the length of the typical fibre of $\mathrm{SO}(2)\to \mathcal{SO}S^2 \to S^2$ with the surface area of $S^2$. The former is given by $2\pi$ and the latter by $4\pi$. In summary we thus have
\[
\int_{\mathrm{SO}(3)}\iota^*\zeta=\frac{2}{16\pi^2}\int_{\mathrm{SO}(3)}\omega_1\wedge\omega_2\wedge\psi=\frac{1}{8\pi^2}\int_{\mathrm{SO}(3)}\omega_1\wedge\omega_2\wedge\psi=1. 
\]

In order to compute the right integral of \eqref{eqn:integrals} we consider the smooth $\pi$-section $\sigma \col S^3 \to \mathrm{SO}(4)$ given by the rule
\[
x=\begin{pmatrix} x_1 \\ x_2 \\ x_3 \\ x_4 \end{pmatrix} \mapsto \begin{pmatrix} x_1 & -x_2 & -x_3 & -x_4 \\ x_2 & x_1 & x_4 & -x_3 \\ x_3 & -x_4 & x_1 & x_2 \\ x_4 & x_3 & -x_2 & x_1 \end{pmatrix}
\]
for all $x \in S^3$. A tedious but straightforward calculation shows that
\[
\sigma^*\mu_4=\begin{pmatrix} 0 & \kappa & -\xi & -\rho  \\ -\kappa & 0 & \rho & -\xi \\ \xi & -\rho & 0 & -\kappa \\ \rho & \xi & \kappa & 0 \end{pmatrix}
\]
where $\xi,\rho,\kappa \in \Omega^1(S^3)$ are given by
\begin{equation}
\begin{aligned}\label{eqn:leftinvariantforms}
\xi&=-x_3\d x_1+x_4\d x_2+x_1\d x_3-x_2 \d x_4,\\
\rho&=-x_4\d x_1-x_3\d x_2+x_2\d x_3+x_1\d x_4,\\
\kappa&=x_2\d x_1-x_1\d x_2+x_4\d x_3-x_3 \d x_4.
\end{aligned}
\end{equation}
From this one computes
\[
\sigma^*\zeta=\frac{8}{16\pi^2}\,\xi\wedge\rho\wedge\kappa=\frac{1}{2\pi^2}\,\xi\wedge\rho\wedge\kappa
\]
and
\begin{multline}\label{eqn:volformthreesphere}
\xi\wedge\rho\wedge \kappa=x_4\d x_1\wedge \d x_2 \wedge \d x_3-x_3\d x_1\wedge\d x_2\wedge \d x_4\\
+x_2\d x_1\wedge \d x_3\wedge \d x_4-x_1\d x_2\wedge \d x_3 \wedge\d x_4,
\end{multline}
where we use that $|x|^2=1$. The right hand side of \eqref{eqn:volformthreesphere} equals the restriction to $S^3$ of the interior product of the inwards pointing radial vector field $-\sum_{i}x_i\partial_i$ and the standard volume form $\d x^1\wedge\d x^2\wedge \d x^3\wedge \d x^4$ of $\R^4$. This is the standard volume form of $S^3$ with respect to the inwards orientation. We thus have
\begin{equation}\label{eqn:volumethresphere}
\int_{S^3}\xi\wedge \rho\wedge \kappa=2\pi^2
\end{equation}
and the claim follows.
\end{proof}

\section{Examples}\label{app:examples}

\subsection{Lorentzian metrics}

We compute the Chern--Simons invariant for a $1$-parameter family of left-invariant Lorentzian metrics on $\mathrm{SU}(2)$. Under the identification $\mathrm{SU}(2)\simeq S^3\subset \R^4$ given by
\[
\begin{pmatrix} x_1+\i x_2 & -x_3+\i x_4 \\ x_3+\i x_4 & x_1-\i x_2 \end{pmatrix} \mapsto \begin{pmatrix} x_1 \\ x_2 \\ x_3 \\ x_4 \end{pmatrix}=x
\]
for all $x \in \R^4$ with $|x|=1$, the Maurer--Cartan form of $\mathrm{SU}(2)$ can be written as
\[
\mu:=\mu_{\mathrm{SU}(2)}=\begin{pmatrix} -\i \kappa & -\xi+\i \rho \\ \xi+\i\rho & \i \kappa \end{pmatrix},
\]
where $\xi,\rho,\kappa$ are given by \eqref{eqn:leftinvariantforms}. The Maurer--Cartan equation $\d\mu+\tfrac{1}{2}[\mu,\mu]=0$ is equivalent to the structure equations
\[
\d\xi=-2\rho\wedge\kappa, \qquad \d\rho=-2\kappa\wedge\xi, \qquad \d\kappa=-2\xi\wedge\rho.
\]
For $\lambda \in \R\setminus\{0\}$ consider the Lorentzian metric
$g_{\lambda}=\xi^2+\rho^2-\lambda^2\kappa^2$. This can be viewed as a natural
Lorentzian analog of a Berger sphere or of a pseudo-Hermitian structure on the
CR-manifold $\mathrm{SU}(2)$ induced by a left-invariant contact form. The Levi-Civita
connection form $\theta_{\lambda}$ of $g_{\lambda}$ -- when pulled back with respect
to the section $\sigma_{\lambda} : \mathrm{SU}(2) \to \mathcal{SO}_0\mathrm{SU}(2)$
corresponding to $(\xi,\rho,\lambda\kappa)$ -- becomes
\[
\sigma^*_{\lambda}\theta_{\lambda}= \begin{pmatrix} 0 & -(\lambda^2+2)\kappa & -\lambda\rho \\ (\lambda^2+2)\kappa & 0 & \lambda\xi \\  -\lambda\rho & \lambda\xi & 0 \end{pmatrix}.
\]
Using the normalisation of \cref{Appendix}, we compute for the Chern--Simons $3$-form
\[
\sigma^*_{\lambda}\CS(\theta_{\lambda})=\left(\frac{8}{16\pi^2}\right)(\lambda^4+2\lambda^2+2)\,\xi\wedge\rho\wedge\kappa.
\]
Consequently, \eqref{eqn:volumethresphere} gives
\[
\int_{\mathrm{SU}(2)}\sigma^*_{\lambda}\CS(\theta_{\lambda})=\lambda^4+2\lambda^2+2>0,
\]
so that \cref{prop:Lorentz-flat-ext} implies that there is no isometric immersion of $g_{\lambda}$ into $\R^{2,2}$ for all $\lambda \in \R\setminus\{0\}$ and that there is no isometric immersion of $g_{\lambda}$ into $\R^{3,1}$ for all $\lambda \in \R\setminus\{0\}$ such that $\lambda^4+2\lambda^2+2 \notin \Z$.

\subsection{Equiaffine immersion}

Real projective $3$-space $\mathbb{RP}^3$ equipped with its standard metric $g_{\mathrm{St}}$ is isometric to $\mathrm{SO}(3)$ equipped with the metric $g=\tfrac{1}{4}\left((\omega_1)^2+(\omega_2)^2+\psi^2\right)$. The Levi-Civita connection form $\theta$ of $g$ -- when pulled back with respect to the section $\sigma : \mathrm{SO}(3) \to \mathcal{SO}\mathrm{SO}(3)$ corresponding to $\tfrac{1}{2}(\omega_1,\omega_2,\psi)$ -- becomes
\[
\sigma^*\theta=\frac{1}{2}\begin{pmatrix} 0 & -\psi & \omega_2 \\ \psi & 0 & -\omega_1 \\ -\omega_2 & \omega_1 & 0 \end{pmatrix}.
\]
Using the normalisation of \cref{Appendix}, we compute for the Chern--Simons $3$-form $\sigma^*\CS(\theta)=\tfrac{1}{16\pi^2}\,\omega_1\wedge\omega_2\wedge\psi$ so that
\[
c_{\sigma}=\int_{\mathrm{SO}(3)}\sigma^*\CS(\theta)=\frac{1}{2} \notin \Z.
\]
\cref{prop:Riemann-flat-ext} implies that there is no isometric immersion of $(\mathbb{RP}^3,g_{\mathrm{St}})$ into $\mathbb{E}^4$, as previously observed by Chern--Simons \cite{MR0353327}. We can however say a bit more. Let $\nabla$ denote the Levi-Civita connection of $g_{\mathrm{St}}$ on $T\mathbb{RP}^3$ and $\nu$ the $\nabla$-parallel volume form on $\mathbb{RP}^3$ corresponding to $\omega_1\wedge\omega_2\wedge\psi$. Since the normalisation of $\B{\,\cdot\,}{\cdot\,}$ for $\mathrm{SL}(3,\R)$ is the same as for $\mathrm{SO}(3)$, our \cref{thm:equiaffine} implies:
\begin{prop}
There exists no global equiaffine immersion of $(\mathbb{RP}^3,\nabla,\nu)$ into $\R^4$. 
\end{prop}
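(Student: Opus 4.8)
The plan is to apply \cref{thm:equiaffine} in its contrapositive form: that theorem shows that a global equiaffine immersion of $(\mathbb{RP}^3,\nabla,\nu)$ into $\R^4$ would force the Chern--Simons invariant associated to $\nabla$ to vanish in $\R/\Z$, so it suffices to exhibit a section for which this invariant is \emph{nonzero}.

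First I would invoke the computation carried out just above, which gives $\int_{\mathrm{SO}(3)}\sigma^*\CS(\theta)=\tfrac12$ for the Levi--Civita connection form $\theta$ of $g$ on the orthonormal frame bundle $\mathcal{SO}\mathrm{SO}(3)$, under the isometry $(\mathbb{RP}^3,g_{\mathrm{St}})\cong\mathrm{SO}(3)$. The one point requiring care is to transport this number from the $\mathrm{SO}(3)$-frame bundle to the volume-preserving $\mathrm{SL}(3,\R)$-frame bundle $\mathcal{SL}\mathbb{RP}^3$ on which \cref{thm:equiaffine} is formulated. Since $\nabla$ preserves both $g_{\mathrm{St}}$ and $\nu$, the orthonormal frame bundle is a reduction of $\mathcal{SL}\mathbb{RP}^3$ to the subgroup $\mathrm{SO}(3)\subset\mathrm{SL}(3,\R)$, and the $\mathfrak{sl}(3,\R)$-connection form of $\nabla$ restricts along this reduction to the $\mathfrak{so}(3)$-form $\theta$. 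Because we use $\tfrac{1}{16\pi^2}$ times the trace form throughout---so that the bilinear form on $\mathfrak{sl}(3,\R)$ restricts to the one chosen on $\mathfrak{so}(3)$---viewing $\sigma$ as a section of $\mathcal{SL}\mathbb{RP}^3$ and using naturality \eqref{eqn:cspullback} shows that the integrand $\sigma^*\CS(\theta)$ is the very same $3$-form in either picture. Hence the $\R/\Z$-valued invariant of $\nabla$, which is well defined by part (ii) of \cref{prop:pullbacks} once $[\CS(\mu_{\mathrm{SL}(3,\R)})]\in H^3(\mathrm{SL}(3,\R),\Z)$ is secured through \cref{prop:sl-so} and the normalization fixed before \cref{thm:equiaffine}, equals $\tfrac12+\Z$.

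To finish, I would observe that $\tfrac12+\Z\neq 0$ in $\R/\Z$, so by \cref{thm:equiaffine} no global equiaffine immersion of $(\mathbb{RP}^3,\nabla,\nu)$ into $\R^4$ can exist. The main obstacle here is not a calculation---all the numerical work has already been done---but the bookkeeping of the second paragraph: verifying that the invariant computed on the orthonormal reduction genuinely coincides with the $\mathrm{SL}(3,\R)$-invariant appearing in \cref{thm:equiaffine}. This hinges entirely on the two normalizations being the common multiple $\tfrac{1}{16\pi^2}$ of the trace form, which is exactly the compatibility recorded before \cref{thm:equiaffine}. Once this identification is in place, the statement is immediate.
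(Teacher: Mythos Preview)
Your proposal is correct and follows essentially the same approach as the paper: the paper also derives the proposition directly from the computation $c_\sigma=\tfrac12\notin\Z$ together with \cref{thm:equiaffine}, noting in a single sentence that the normalization of $\B{\,\cdot\,}{\cdot\,}$ for $\mathrm{SL}(3,\R)$ coincides with that for $\mathrm{SO}(3)$. Your second paragraph spells out more carefully the bookkeeping that justifies transporting the number $\tfrac12$ from the orthonormal reduction to the $\mathrm{SL}(3,\R)$-frame bundle, but this is exactly the content the paper compresses into that one sentence.
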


\subsection*{Funding} This research was funded in part by the Austrian Science Fund
(FWF): 10.55776/P33559. For open access purposes, the authors have applied a CC BY
public copyright license to any author-accepted manuscript version arising from this
submission.  This article is based upon work from COST Action CaLISTA CA21109
supported by COST (European Cooperation in Science and
Technology). https://www.cost.eu. The authors also would like to thank the Isaac
Newton Institute for Mathematical Sciences, Cambridge, for support and hospitality
during the programme Twistor theory, where work on this paper was undertaken. This
work was supported by EPSRC grant EP/Z000580/1. T.M. was partially supported by the
DFG priority programme ``Geometry at infinity'' SPP 2026: ME 4899/1-2.

\subsection*{Data availability statement} Data availability is not applicable to this article 
as no new data were created or analyzed in this study.

\subsection*{Conflict of interest} On behalf of all authors, the corresponding author
states that there is no conflict of interest. 

\providecommand{\mr}[1]{\href{http://www.ams.org/mathscinet-getitem?mr=#1}{MR~#1}}
\providecommand{\zbl}[1]{\href{http://www.zentralblatt-math.org/zmath/en/search/?q=an:#1}{zbM~#1}}
\providecommand{\arxiv}[1]{\href{http://www.arxiv.org/abs/#1}{arXiv:#1 }}
\providecommand{\doi}[1]{\href{http://dx.doi.org/#1}{DOI~#1}}
\providecommand{\href}[2]{#2}

\end{document}